\theoremstyle{plain}
\newtheorem*{theorem*}{Theorem}
\newtheorem{theorem}{Theorem}
\newtheorem{lemma}[theorem]{Lemma}
\newtheorem{corollary}[theorem]{Corollary}
\newtheorem{proposition}[theorem]{Proposition}
\newtheorem{conjecture}{Conjecture}
\newtheorem*{claim*}{Claim}
\theoremstyle{definition}
\theoremstyle{remark}
\newtheorem*{remark*}{Remark}
\newcommand{\RR}{\mathbb{R}}
\newcommand{\QQ}{\mathbb{Q}}
\newcommand{\NN}{\mathbb{N}}
\newcommand{\ZZ}{\mathbb{Z}}
\newcommand{\calQ}{\mathcal{Q}}
\newcommand{\calS}{\mathcal{S}}
\newcommand{\bone}{\mathbf{1}}
\renewcommand{\leq}{\leqslant} \renewcommand{\geq}{\geqslant}
\DeclarePairedDelimiter{\abs}{\lvert}{\rvert}
\DeclarePairedDelimiter{\norm}{\lVert}{\rVert}
\DeclarePairedDelimiter{\set}{\lbrace}{\rbrace}
\DeclarePairedDelimiter{\parens}{\lparen}{\rparen}
\DeclarePairedDelimiter{\brackets}{\lbrack}{\rbrack}
\DeclareMathOperator{\meas}{m}
\def\eps{{\varepsilon}}
\def\1int{{[0,1]}}
\title[Toward Khintchine's theorem with a moving target]{Toward
  Khintchine's theorem with a moving target: extra divergence or
  finitely centered target}
\author{Gilbert Michaud \and Felipe A.~Ram{\'i}rez}
\address{Department of Mathematics and Computer Science, Wesleyan
  University, CT}
\email{gmichaud@wesleyan.edu}
\email{framirez@wesleyan.edu}
\date{}
\subjclass[2020]{11J83, 11K60}
\keywords{Diophantine approximation, inhomogeneous approximation, shrinking targets, moving targets}
\begin{document}

\begin{abstract}
  Sz{\"u}sz's inhomogeneous version (1958) of Khintchine's theorem
  (1924) gives conditions on $\psi:\NN\to\RR_{\geq 0}$ under which for
  almost every real number $\alpha$ there exist infinitely many
  rationals $p/q$ such that
  \begin{equation*}
    \abs*{\alpha - \frac{p+\gamma}{q}} < \frac{\psi(q)}{q},
  \end{equation*}
  where $\gamma\in\RR$ is some fixed inhomogeneous parameter. It is
  often interpreted as a statement about visits of
  $q\alpha\,(\bmod 1)$ to a shrinking target centered around
  $\gamma\,(\bmod 1)$, viewed in $\RR/\ZZ$. Hauke and the second
  author have conjectured that Sz{\"u}sz's result continues to hold if
  the target is allowed to move as well as shrink, that is, if the
  inhomogeneous parameter $\gamma$ is allowed to depend on the
  denominator $q$ of the approximating rational. We show that the
  conjecture holds under an ``extra divergence'' assumption on
  $\psi$. We also show that it holds when the inhomogeneous
  parameter's movement is constrained to a finite set. As a byproduct,
  we obtain a finite-colorings version of the inhomogeneous Khintchine
  theorem, giving rational approximations with monochromatic denominators.
\end{abstract}

\maketitle

\section{Introduction}

For a function $\psi:\NN\to\RR_{\geq 0}$ and a real number
$\gamma\in\RR$, define
\begin{equation*}
  W(\psi, \gamma) = \set*{\alpha\in[0,1] : \norm{q\alpha - \gamma} < \psi(q) \textrm{ for infinitely many } q\in \NN},
\end{equation*}
where $\norm{\cdot}$ denotes distance to $\ZZ$. The elements of
$W(\psi,\gamma)$ are called the \emph{$\psi$-approximable numbers with
  inhomogeneous parameter $\gamma$}.

This definition has a handy dynamical interpretation where one views
$[0,1]$ as a fundamental domain of $\RR/\ZZ$. Then $\alpha$ is an
element of $W(\psi,\gamma)$ if the rotation orbit
$\set{q\alpha\,(\bmod 1)}_{q=1}^\infty$ enters the shrinking target
$(\gamma-\psi(q), \gamma + \psi(q))\, (\bmod\, 1)$ infinitely many times.

In metric Diophantine approximation we are interested in the Lebesgue
measure of $W(\psi,\gamma)$ and of similarly defined sets. The
following classical theorem gives conditions under which
$W(\psi, \gamma)$ has measure $0$ or measure $1$.

\begin{theorem*}[Inhomogeneous Khintchine Theorem,~\cite{SzuszinhomKT}]
  If $\psi:\NN\to\RR_{\geq 0}$ is decreasing and $\gamma\in\RR$, then
  \begin{equation*}
    \meas(W(\psi, \gamma)) =
    \begin{cases}
      0 &\textrm{if } \sum\psi(q) < \infty \\
      1 &\textrm{if } \sum\psi(q) =\infty,
    \end{cases}
  \end{equation*}
  where $\meas$ denotes Lebesgue measure. 
\end{theorem*}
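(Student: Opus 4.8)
The plan is to treat the two halves separately, since they are of completely different character. Write $A_q = \{\alpha\in[0,1] : \norm{q\alpha - \gamma} < \psi(q)\}$, so that $W(\psi,\gamma) = \limsup_q A_q$. It is cleanest to work on the circle $\RR/\ZZ$, where $\meas(A_q) = \min(2\psi(q),1)$ exactly, so that $\sum_q \meas(A_q) \ll \sum_q \psi(q) < \infty$ and the Borel--Cantelli lemma immediately yields $\meas(W(\psi,\gamma)) = 0$ in the convergence case. Monotonicity is not used here, and this step is essentially one line.

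The divergence case is where all the work is. First, reductions: if $\psi(q)\ge 1/2$ for infinitely many $q$ then $A_q = \RR/\ZZ$ for those $q$ and we are done, so we may assume $\psi(q) < 1/2$; then $A_q$ is a disjoint union of $q$ arcs of length $2\psi(q)/q$, spaced at gap $1/q$, invariant under translation by $1/q$, with $\meas(A_q) = 2\psi(q)$. A standard further reduction — using that $\psi$ decreasing together with $\sum\psi = \infty$ forces $\sum_q \min(\psi(q),1/q) = \infty$, then rescaling — lets us also assume $\psi(q) \le 1/(2q)$. Next I would establish a zero--one law, $\meas(W(\psi,\gamma)) \in \{0,1\}$ (of Gallagher type, exploiting the $\tfrac1q$-invariance of the $A_q$: the tail unions $\bigcup_{q\ge Q} A_q$ become asymptotically uniform at every scale, and a Lebesgue density argument rules out intermediate values), so that it suffices to prove $\meas(W(\psi,\gamma)) > 0$. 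For this I would invoke the divergence Borel--Cantelli lemma with quasi-independence (the Chung--Erd\H{o}s / Paley--Zygmund inequality): since $\sum_q \meas(A_q) = \infty$, it is enough to show, for a fixed constant $C$ and infinitely many $N$, that $\sum_{q,r\le N} \meas(A_q\cap A_r) \le C\bigl(\sum_{q\le N}\psi(q)\bigr)^2$. The overlaps would be controlled by counting, for $q<r$, how many arcs of $A_r$ meet a given arc of $A_q$; an argument using the three-distance theorem (the admissible displacements lie in $\gcd(q,r)\ZZ$, shifted by a quantity depending on $\gamma$) gives an overlap estimate of the shape $\meas(A_q\cap A_r) \ll \psi(q)\psi(r) + \gcd(q,r)\,\psi(r)/r$, whose main term sums to $O\bigl((\sum_{q\le N}\psi(q))^2\bigr)$, as needed.

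The main obstacle — and the reason the divergence half is genuinely deep while the convergence half is trivial — is controlling the arithmetic error term $\sum_{q<r} \gcd(q,r)\,\psi(r)/r$: summed naively over all $q<r\le N$ it is of size $\sum_{r\le N}\psi(r)\tau(r)$, which can dominate $\bigl(\sum_{q\le N}\psi(q)\bigr)^2$ precisely when $\sum\psi$ diverges slowly. This is exactly where monotonicity must be exploited. My plan would be to regroup: choose blocks $1 = N_0 < N_1 < \cdots$ with $\sum_{N_{k-1}<q\le N_k}\psi(q) \asymp 1$, replace the individual $A_q$ by the block sets $E_k = \bigcup_{N_{k-1}<q\le N_k} A_q$, use Bonferroni together with the overlap estimate to get $\meas(E_k) \gg 1$, and then run the Chung--Erd\H{o}s argument on the $E_k$; since consecutive blocks sit at widely separated scales, the cross-block overlaps can be absorbed by summing the overlap estimate block by block and using monotonicity to dominate the arithmetic errors by the block masses. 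I expect this block bookkeeping — arranging the $N_k$ so that both the within-block and the between-block error sums stay under control while keeping $\sum_k \meas(E_k) = \infty$ — to be the crux of the entire argument. Once it is in place, $\meas(W(\psi,\gamma)) > 0$ follows, and the zero--one law upgrades this to $\meas(W(\psi,\gamma)) = 1$.

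An alternative route would bypass the zero--one law and the quasi-independence estimate entirely and argue via continued fractions: for almost every $\alpha$ one reads off membership in $W(\psi,\gamma)$ from the Ostrowski expansion of $\gamma$ with respect to $\alpha$ and the sizes of the partial quotients of $\alpha$. I would expect the same obstacle to resurface there, now disguised as an accounting of how often and how large the partial quotients are, so the difficulty is intrinsic rather than an artifact of the chosen method.
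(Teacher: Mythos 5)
The convergence half of your proposal is fine (and matches the paper's remark that it is a one-line Borel--Cantelli application). The paper itself does not prove this theorem --- it quotes it from Sz\"usz --- but its Sections on ``QIA for a fixed target center'' and on the finitely centered target reprove the divergence half (take $\ell=1$ in Theorem~\ref{thm:finitetarget}) along Schmidt's lines, and measured against that route your divergence argument has two genuine gaps.

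First, your plan leans on ``a zero--one law of Gallagher type'' for $W(\psi,\gamma)$. No such inhomogeneous zero--one law is known; the paper explicitly remarks that proving one is a well-known open problem, and the Cassels/Gallagher proofs use homogeneity in an essential way. The workable substitute, and the one the paper uses, is to prove the quasi-independence estimate \emph{locally}: show $\meas(A_q\cap U)\geq\frac12\meas(A_q)\meas(U)$ for every open $U$ and all large $q$ (Lemma~\ref{uniformity}, or condition~\eqref{vb89}), run Chung--Erd\H{o}s inside $U$, and then upgrade positive local measure to full measure via the density lemma of Beresnevich--Dickinson--Velani (Proposition~\ref{BDVdensitylemma}) or via Proposition~\ref{BHV}. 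As written, ``positive measure plus a zero--one law'' is not an available path.

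Second, and more seriously, the blocking scheme does not control the arithmetic error term, and no amount of regrouping can: the term is simply too large for the unreduced sets $A_q$. Take $\psi(q)=1/(q\log q)$, which is decreasing with divergent sum. Using $\sum_{q<r}\gcd(q,r)=\sum_{d\mid r}d\,\varphi(r/d)$, whose average order is $\asymp r\log r$, the error $\sum_{q<r\leq N}\gcd(q,r)\psi(r)/r$ is $\asymp\sum_{r\leq N}1/r\asymp\log N$, while $\bigl(\sum_{q\leq N}\psi(q)\bigr)^2\asymp(\log\log N)^2$; worse, even inside a single block of mass $\asymp 1$ (which for this $\psi$ stretches from $N$ to about $N^e$) the within-block pairwise overlap sum already grows like $\log N$, so Bonferroni/Chung--Erd\H{o}s cannot give $\meas(E_k)\gg 1$. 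This is exactly the obstruction the paper discusses around~\eqref{eq:extraterm}: with only the unreduced overlap bound of Lemma~\ref{overlaps}, the method delivers Theorem~\ref{thm:extradiv}, i.e.\ Khintchine only under extra divergence with a $\sqrt{\log q}$ loss (coming, via Kac's theorem, from the normal size of $d(q)$), not the full statement. The missing ingredient for a fixed $\gamma$ is Schmidt's \emph{shifted} reduction: pick a rational $A/B$ approximating $\gamma$ with $B\leq q^{1/2}$ as in~\eqref{eq:AandB}, replace $A_q$ by the sets $A_q'$ of~\eqref{eq:partialrestrictedA} restricted to numerators $a$ with $\gcd(aB+A,q)=1$, and prove the counting bound $\sum_{r<q,\ (q,r)=d}N_\delta(q,r)\ll qd^{-1/4}+d$ (Lemma~\ref{lem:N}); combined with the case analysis of Proposition~\ref{prop:qia} this tames the gcd term and, together with the equidistribution of $S(q)/q$ and the local-to-global step above, yields the divergence half. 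Your continued-fraction/Ostrowski alternative is plausible in outline but is not developed, and, as you anticipate, the same arithmetic difficulty reappears there.
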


\begin{remark*}
  The convergence part is a standard application of the
  Borel--Cantelli lemma. Throughout this paper, we are only concerned
  with the divergence part.
\end{remark*}

The inhomogeneous Khintchine theorem has been generalized to higher
dimensions~\cite{Schmidt} and to systems of linear
forms\cite{Sprindzuk}. Recent
papers~\cite{allen2021independence,allen2023inhomogeneous,hauke2024duffinschaefferconjecturemovingtarget}
have focused special attention on the fact that these higher
dimensional generalizations allow for the inhomogeneous parameter
$\gamma$ to depend on the denominator $q$---the target can move. Yet,
it is still not known whether the one dimensional theorem quoted above
allows for a moving target. The following problem is posed
in~\cite[Conjecture~2]{hauke2024duffinschaefferconjecturemovingtarget}.

\begin{conjecture}[Khintchine's theorem with a moving target]\th\label{conj:movingKT}
  For $\gamma:=(\gamma_q)_{q}$ a sequence of real numbers and
  $\psi:\NN\to\RR_{\geq 0}$ a function, define
  \begin{equation*}
    W(\psi, \gamma) = \set*{\alpha\in[0,1] : \norm{q\alpha - \gamma_q} < \psi(q) \textrm{ infinitely often}}.
  \end{equation*}
  If $\psi:\NN\to\RR_{\geq 0}$ is decreasing and
  $\sum \psi(q) = \infty$, then $\meas(W(\psi, \gamma))=1$.
\end{conjecture}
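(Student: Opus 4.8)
The natural approach is a second-moment (quasi-independence) argument for the $\limsup$ set. Write $W(\psi,\gamma)=\limsup_q A_q$ with $A_q=\set{\alpha\in\RR/\ZZ:\norm{q\alpha-\gamma_q}<\psi(q)}$; we may assume $\psi(q)<1/2$ for all $q$ (otherwise $A_q=\RR/\ZZ$ eventually and there is nothing to prove), so each $A_q$ is a disjoint union of $q$ arcs of length $2\psi(q)/q$ centred at the equally spaced points $(p+\gamma_q)/q$, whence $\meas(A_q)=2\psi(q)$ and $\sum_q\meas(A_q)=\infty$. By the quasi-independence form of the divergence Borel--Cantelli lemma, $\meas(\limsup_q A_q)>0$ provided
\begin{equation*}
  \sum_{q,r\le Q}\meas(A_q\cap A_r)\ll\Bigl(\sum_{q\le Q}\psi(q)\Bigr)^{2}
\end{equation*}
holds along a sequence $Q\to\infty$; positivity is then boosted to full measure either by a Gallagher-type zero--one law or, concretely, by running the same estimate for tail sums intersected with an arbitrary subinterval and invoking the Lebesgue density theorem. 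So the whole problem is the overlap estimate.

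For $q\le r$ I would estimate $\meas(A_q\cap A_r)$ by counting near-coincidences of resonant points: an arc of $A_q$ meets an arc of $A_r$ only when $(p+\gamma_q)/q$ and $(p'+\gamma_r)/r$ lie within $\psi(q)/q+\psi(r)/r$ in $\RR/\ZZ$, and clearing denominators this says some integer $N\in\gcd(q,r)\ZZ$ lies within $r\psi(q)+q\psi(r)$ of $-(r\gamma_q-q\gamma_r)$; each admissible $N$ accounts for exactly $\gcd(q,r)$ pairs $(p,p')$, and each overlap arc has length at most $2\psi(r)/r$ (using $q\psi(r)\le r\psi(q)$, by monotonicity). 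Counting multiples of $\gcd(q,r)$ in an interval of length $2(r\psi(q)+q\psi(r))$ gives
\begin{equation*}
  \meas(A_q\cap A_r)\ll\psi(q)\psi(r)+\frac{\gcd(q,r)\,\psi(r)}{r}\qquad(q\le r).
\end{equation*}
The $\psi(q)\psi(r)$ terms contribute $O\bigl((\sum_{q\le Q}\psi(q))^2\bigr)$ and the diagonal $\sum_{q\le Q}2\psi(q)$ is $o\bigl((\sum\psi)^2\bigr)$, so everything reduces to the \emph{$\gcd$ term}
\begin{equation*}
  \Sigma(Q):=\sum_{q<r\le Q}\frac{\gcd(q,r)\,\psi(r)}{r}\ \le\ \sum_{r\le Q}\frac{\psi(r)}{r}\sum_{q\le r}\gcd(q,r)\ \le\ \sum_{r\le Q}\psi(r)\,d(r).
\end{equation*}

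This $\gcd$ term is the crux, and I expect it to be the genuine obstacle: it is the same phenomenon behind Duffin--Schaeffer-type difficulties, and the moving target is exactly what strips away the continued-fraction machinery that disposes of it in the homogeneous case. Using $\sum_{r\le Q}d(r)\ll Q\log Q$ and monotonicity of $\psi$, partial summation gives the unconditional bound $\Sigma(Q)\ll(\log Q)\sum_{q\le Q}\psi(q)$, which is negligible against $(\sum_{q\le Q}\psi(q))^2$ whenever $\sum_{q\le Q}\psi(q)$ is not $o(\log Q)$; under this ``extra divergence'' hypothesis the second-moment argument closes and $\meas(W(\psi,\gamma))=1$. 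For a general decreasing $\psi$ with $\sum\psi(q)=\infty$ --- say $\psi(q)=1/(q\log q)$, where $\sum_{q\le Q}\psi(q)\asymp\log\log Q$ --- this termwise bound is hopelessly lossy, and one would need to extract real cancellation from $\Sigma(Q)$, presumably by using monotonicity to pass to a scale-adapted family of denominators with small pairwise greatest common divisor while preserving divergence of $\sum\psi$. Doing so unconditionally is the open heart of the conjecture, and I would not expect to get past it without a new idea.

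The finitely centred case $\gamma_q\in\set{c_1,\dots,c_\ell}$ is, by contrast, within reach of this framework. I would split $\NN$ into colour classes $S_i=\set{q:\gamma_q=c_i}$, fix $i$ with $\sum_{q\in S_i}\psi(q)=\infty$ (pigeonhole), and run the second-moment argument over $q\in S_i$ alone. Now the shift $r\gamma_q-q\gamma_r=(r-q)c_i$ is rigid: for $c_i$ irrational, equidistribution of $(r-q)c_i$ modulo $\gcd(q,r)$ forces a multiple of $\gcd(q,r)$ into the window of length $2(r\psi(q)+q\psi(r))$ for only an $O\bigl((r\psi(q)+q\psi(r))/\gcd(q,r)\bigr)$ proportion of pairs on average, and $c_i$ rational is handled combinatorially. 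This collapses the fatal $\gcd(q,r)\psi(r)/r$ back down to $O(\psi(q)\psi(r))$; the bookkeeping to make the averaging rigorous (and to manage the interaction with the constraint $\gcd(q,r)=g$ as $g$ ranges over divisors of $r$) is where the effort concentrates. Reformulated, this yields a finite-colourings version of the Inhomogeneous Khintchine Theorem stated above, producing rational approximations with monochromatic denominators.
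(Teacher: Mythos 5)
You do not prove the statement, and you are right not to: this is exactly the open \th~\ref{conj:movingKT}, which the paper itself does not resolve but only establishes under additional hypotheses (\th~\ref{thm:extradiv}, \th~\ref{thm:finitetarget} and Corollary~\ref{cor:finite}). Your diagnosis of the obstruction matches the paper's: your overlap bound is \th~\ref{overlaps}, the term $\gcd(q,r)\meas(A_q)/q$ is precisely what blocks an unconditional quasi-independence-on-average argument, and since no inhomogeneous zero--one law is available, positive measure must be localized to arbitrary open sets and upgraded via a density lemma (\th~\ref{uniformity} and \th~\ref{BDVdensitylemma} in the paper). So as an assessment of the landscape your proposal is honest and essentially accurate; as a proof of the conjecture it has the same gap everyone has, namely controlling the $\gcd$ term for a genuinely moving target.

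Two points where your sketch falls short even of the paper's partial results. First, your treatment of the $\gcd$ term via $\sum_{r\le Q}\psi(r)d(r)\ll(\log Q)\Psi(Q)$ yields only an extra-divergence statement at the level $\Psi(Q)\gg\log Q$; \th~\ref{thm:extradiv} does markedly better (roughly $\Psi(Q)\gg\sqrt{\log Q}$ up to iterated logarithms) by combining Yu's dyadic pigeonhole on the level sets of $\eta(q)=\sum_{d\mid q}\varphi(d)/d\le d(q)$ (\th~\ref{prop:abstractyu}) with Kac's theorem, which gives the weighted divisor bound of \th~\ref{loglogsum} and the saving $\sum_{n\le x}1/d(n)\asymp x/\sqrt{\log x}$; your termwise bound cannot see this square-root gain. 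Second, your plan for the finitely centered case would fail as stated: you propose to run the second moment on the full sets $A_q$ and claim that equidistribution of $(r-q)c_i$ collapses the $\gcd$ term to $O(\psi(q)\psi(r))$, with rational $c_i$ ``handled combinatorially.'' But for rational centers (already for $c_i=0$) the $\gcd$ term is an honest lower bound for $\meas(A_q\cap A_r)$ when $r\mid q$, and for slowly diverging $\psi$ (e.g.\ $\psi(q)=1/(q\log q)$) it dominates $\Psi(Q)^2$, so no averaging over pairs can remove it from the unreduced sets. The paper's route requires Schmidt's shifted reduction: approximate $\sigma_k$ by $A/B$ as in~(\ref{eq:AandB}), retain only numerators $a$ with $(aB+A,q)=1$, prove the counting estimate \th~\ref{lem:N} (where it is essential that the center is fixed), deduce QIA for the reduced sets (\th~\ref{prop:qia}), pigeonhole the colors with weights $\varphi(q)\psi(q)/q$, and conclude full measure via the Beresnevich--Hauke--Velani criterion \th~\ref{BHV} together with the equidistribution of $S(q)/q$ (\th~\ref{equidistribution}). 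Your pigeonholing idea is the correct first move, but without the shifted reduction and \th~\ref{lem:N} the pair-correlation bound you need is simply false.
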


\begin{remark*}
  Note that in the notation, $\gamma$ is now the name of the sequence
  $(\gamma_q)_{q\geq 1}$. Henceforth, when a real number
  $\sigma\in\RR$ appears in the notation $W(\psi,\sigma)$, we intend
  for it to be interpreted as the constant sequence
  $(\sigma, \sigma, \dots)$.
\end{remark*}

In this paper we establish~\th~\ref{conj:movingKT} under an extra
divergence assumption (\th~\ref{thm:extradiv}) and under the
assumption that the sequence $(\gamma_q)_q$ lies in a finite set
(\th~\ref{thm:finitetarget}).

\subsection{Extra divergence}

Sprind{\v z}uk~\cite{Sprindzuk} gives an asymptotic formula for the number
of solutions to $\norm{q\alpha-\gamma_q} < \psi(q)\, (q\leq Q)$, and
this formula shows that~\th~\ref{conj:movingKT} holds if
\begin{equation*}
\sum_{q\leq Q} \psi(q) \gg (\log Q)(\log\log Q)^{3+\delta}
\end{equation*}
(see \th~\ref{prop:sprindzuk}). But this rate is too fast to even
apply to the function $1/q$. We prove the following extra divergence
result, which does apply in particular to $1/q$.

\begin{theorem}\th\label{thm:extradiv}
  Let $\gamma:=(\gamma_q)_q$ be a sequence of real numbers and $\psi$
  decreasing. If there exists $\eps>0$ and $k\geq 2$ for which
  \begin{equation*}
    \sum_{q=1}^\infty \frac{\psi(q)}{\sqrt{\log q}(\log\log q) (\log\log\log q) \dots (\underbrace{\log\log\dots\log}_{k \textrm{ iterates}} q)^{1+\eps}} = \infty,
  \end{equation*}
  then $\meas(W(\psi,\gamma))=1$.  In particular, $\sum \psi(q)(\log q)^{-\frac{1}{2}- \eps}=\infty$ suffices.
\end{theorem}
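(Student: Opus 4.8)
The plan is to establish something a bit stronger than $\meas(W(\psi,\gamma))=1$ by a second‑moment argument, but carried out not over all denominators --- where \Sprindzuk's estimate only yields \th~\ref{prop:sprindzuk}, which falls short for functions as small as $1/q$ --- but over a carefully chosen sparse set $A\subseteq\NN$ of denominators on which the ``moving‑target penalty'' in the variance estimate drops from a power of $\log Q$ down to (essentially) $\sqrt{\log Q}$; this is precisely the gain that the weight $g(q)$ in the hypothesis is built to exploit. Writing $\mathbb{1}_q(\alpha)=\mathbb{1}\brackets{\norm{q\alpha-\gamma_q}<\psi(q)}$, $N_A(Q,\alpha)=\#\set{q\le Q:q\in A,\ \mathbb{1}_q(\alpha)=1}$, and $\Psi_A(Q)=\sum_{q\in A\cap[1,Q]}\psi(q)$, one expands each $\mathbb{1}_q$ into a Fourier series supported on multiples of $q$, with the coefficient at frequency $nq$ bounded by $\min\set{2\psi(q),(\pi\abs n)^{-1}}$ for $n\ne0$; this gives $\int_0^1 N_A(Q,\alpha)\,d\alpha=2\Psi_A(Q)$ and, on squaring, a variance estimate of the form $\int_0^1(N_A(Q,\alpha)-2\Psi_A(Q))^2\,d\alpha\ll\Psi_A(Q)+\sum_{q\ne q'\in A\cap[1,Q]}\abs{\operatorname{Cov}(\mathbb{1}_q,\mathbb{1}_{q'})}$, where each covariance is controlled in terms of $\gcd(q,q')$, $\psi(q)$ and $\psi(q')$; the moving parameters $\gamma_q$ enter the covariances only through a unimodular phase, so the size bounds are those of the homogeneous problem. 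The final sum is the penalty we must beat.

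The crucial --- and hardest --- step is to choose $A$ so that this penalty is at most $\Psi_A(Q)\sqrt{\log Q}$ times lower‑order iterated‑logarithm factors, while $\Psi_A(Q)$ stays large enough for the hypothesis to engage. Sets with strong coprimality are the natural candidates: for instance, if $A$ consists of integers whose largest prime factor exceeds $q$ divided by a slowly growing function $\ell(q)$, then distinct elements of $A$ not sharing their large prime have greatest common divisor at most $\ell(q)$, which tames $\sum_{q\ne q'}\abs{\operatorname{Cov}(\mathbb{1}_q,\mathbb{1}_{q'})}$ through a $\gcd$-sum estimate, while partial summation together with the monotonicity of $\psi$ prevents $\Psi_A$ from dropping below $\Psi$ by more than iterated‑logarithm factors. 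The role of the hypothesis is then transparent: for decreasing $\psi$, the condition $\sum\psi(q)/g(q)=\infty$ is, via Abel summation, a growth condition forcing $\Psi(Q)$ to outgrow $\sqrt{\log Q}$ together with just enough iterated‑logarithm room, which is exactly what makes $\Psi_A(Q)^2$ dominate the variance along a sequence $Q_j\to\infty$.

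Granting such a sequence, $N_A(Q_j,\alpha)/(2\Psi_A(Q_j))\to1$ in $L^2$, hence almost everywhere along a further subsequence; since $Q\mapsto N_A(Q,\alpha)$ is nondecreasing, $N_A(Q,\alpha)\to\infty$ for almost every $\alpha$, i.e.\ $\meas(W(\psi\mathbb{1}_A,\gamma))=1$, and $W(\psi\mathbb{1}_A,\gamma)\subseteq W(\psi,\gamma)$ gives the theorem. The ``in particular'' clause follows because $(\log q)^{-1/2-\eps}\le 1/g(q)$ for all large $q$ once $k$ is chosen large enough, so $\sum\psi(q)(\log q)^{-1/2-\eps}=\infty$ implies the main hypothesis. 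The main obstacle is the book‑keeping concentrated in the second step: one must arrange, at a single set $A$, both that $\Psi_A$ loses only iterated logarithms relative to $\Psi$ and that the $\gcd$-sum over $A$ costs only a single factor $\sqrt{\log Q}$ --- were the penalty only reducible to $\log Q$, nothing beyond \th~\ref{prop:sprindzuk} would be gained, so the whole argument hinges on this estimate landing precisely at the $\sqrt{\log}$ scale that $g$ encodes.
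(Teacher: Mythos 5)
There is a genuine gap, and it sits exactly where you acknowledge the difficulty: the choice of the sparse set $A$ and the claimed estimates for it are asserted, not proved, and for the candidate you actually name they fail. If $A$ consists of integers $q$ whose largest prime factor exceeds $q/\ell(q)$ with $\ell$ slowly growing, then $A$ has counting density of order $\log\ell(x)/\log x$ (such $q$ are of the form $pm$ with $m\leq \ell$ and $p$ prime), so by partial summation $\Psi_A(Q)$ loses a factor of essentially $\log Q/\log\ell(Q)$ relative to $\Psi(Q)$ --- a full logarithm, not ``iterated-logarithm factors'' as claimed. With that loss, the hypothesis $\sum \psi(q)/(\sqrt{\log q}\cdots)=\infty$ cannot make $\Psi_A(Q)^2$ dominate the covariance penalty, so the whole mechanism by which the $\sqrt{\log}$ scale is supposed to be reached is unsubstantiated. (For comparison, the paper's $\sqrt{\log}$ does not come from coprimality of a thinned set of denominators at all: it keeps all $q$, notes that the extra term in $\meas(A_q\cap A_r)\leq 2\meas(A_q)\meas(A_r)+\frac{\gcd(q,r)}{q}\meas(A_q)$ sums over $r\leq q$ to at most $d(q)\meas(A_q)$, and then exploits the facts that $\sum_{n\leq x}1/d(n)\asymp x/\sqrt{\log x}$ and Kac's theorem on the normal order of $\log d(n)$, absorbing the extra term via a dyadic decomposition in the level of $\eta(q)$ as in \th~\ref{prop:abstractyu}.)

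A second, independent gap is the endgame. Your conclusion $N_A(Q_j,\alpha)/(2\Psi_A(Q_j))\to 1$ in $L^2$ requires the variance to be $o(\Psi_A(Q_j)^2)$, which is strictly stronger than the quasi-independence-on-average bound $O(\Psi_A(Q_j)^2)$ that gcd-sum estimates typically deliver; nothing in the sketch produces the $o(\cdot)$. If you only obtain QIA, the divergence Borel--Cantelli lemma gives positive measure for $W(\psi\bone_A,\gamma)$, and since there is no inhomogeneous zero--one law you still need a localization argument to upgrade to full measure --- e.g.\ establishing the same QIA bound relative to every open set $U$ (as in \th~\ref{uniformity}) and invoking \th~\ref{BDVdensitylemma}. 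The proposal addresses neither the $o(\cdot)$ requirement nor the zero--one issue, so even granting the (unproved) gcd-sum step, the argument as written does not reach $\meas(W(\psi,\gamma))=1$.
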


\begin{remark*}
  Throughout this paper, we use the notation
  $\log x = \max\set{1, \ln x}$, where $\ln$ is the natural logarithm.
\end{remark*}

The following corollary is immediate.

\begin{corollary}\th\label{cor:inparticular}
  \th~\ref{conj:movingKT} holds for any monotonic $\psi$ such that
  \begin{equation*}
    \psi(q) \gg \frac{(\overbrace{\log\log\dots\log}^{k \textrm{ iterates}} q)^\eps}{q \sqrt{\log q}}.
  \end{equation*}
  In particular, it holds for $\psi(q) = 1/q$. 
\end{corollary}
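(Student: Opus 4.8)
The plan is to derive this directly from Theorem~\ref{thm:extradiv}, whose conclusion is precisely $\meas(W(\psi,\gamma))=1$; since the hypotheses of that theorem concern the divergence of a single series, the whole task is to check that the assumed lower bound on $\psi$ makes that series diverge. Write $\log^{(j)}$ for the $j$‑fold iterated logarithm ($\log^{(1)}=\log$, $\log^{(j+1)}=\log\log^{(j)}$), and suppose $\psi$ is decreasing with $\psi(q)\geq c\,(\log^{(k)}q)^{\eps}\big/\big(q\sqrt{\log q}\big)$ for all large $q$, where $c,\eps>0$ and $k\geq 1$. First I would fix an auxiliary $\eps'\in(0,\eps)$ and set $k'=\max\{k,2\}\geq 2$, and then substitute the lower bound for $\psi(q)$ into the series of Theorem~\ref{thm:extradiv} taken with parameters $k',\eps'$. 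Simplifying --- using $\sqrt{\log q}\cdot\sqrt{\log q}=\log q$ (which, for $k=1$, combines the denominator's $\sqrt{\log q}$ with the one coming from the lower bound on $\psi$) and cancelling the numerator's $(\log^{(k)}q)^{\eps}$ against the corresponding iterated logarithm in the denominator --- shows that each term dominates a fixed constant multiple of a Bertrand‑type term $\big(q\,\log q\,\log^{(2)}q\cdots\big)^{-1}$ in which every iterated logarithm before the $k$‑th keeps exponent $1$, while the exponent on $\log^{(k)}q$ has been lowered to a value strictly below $1$. A Bertrand‑type series whose first exponent differing from $1$ (in the scale $q,\log q,\log^{(2)}q,\dots$) is $<1$ diverges --- one checks this routinely by the integral test, iterating the substitution $u=\log t$ until convergence is governed by $\int^{\infty}v^{-\gamma}\,dv$ with $\gamma<1$ --- so the series of Theorem~\ref{thm:extradiv} diverges for these $k',\eps'$, the theorem applies, and it yields the conclusion; as $\psi$ is decreasing, this is exactly Conjecture~\ref{conj:movingKT} for such $\psi$.

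For $\psi(q)=1/q$ the verification is even more direct: with $k'=2$ and any $\eps>0$ the relevant series is $\sum_q\big(q\sqrt{\log q}\,(\log\log q)^{1+\eps}\big)^{-1}$, and substituting $u=\log t$ in the corresponding integral gives, up to a constant, $\int^{\infty}u^{-1/2}(\log u)^{-1-\eps}\,du=\infty$, the point being that the polynomially decaying $u^{-1/2}$ cannot be compensated by any fixed power of $\log u$. Hence Theorem~\ref{thm:extradiv} applies with $\psi(q)=1/q$. I do not foresee any real obstacle here: the corollary is a one‑line consequence of Theorem~\ref{thm:extradiv}, and the only step requiring (wholly standard) care is the divergence of the Bertrand‑type series --- that is, confirming that downgrading one full power of $\log q$ to $\sqrt{\log q}$ in the denominator of Theorem~\ref{thm:extradiv} still leaves enough room to absorb the finite iterated‑logarithm tail together with an extra $\eps$ of slack while keeping the series divergent.
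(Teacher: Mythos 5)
Your proposal is correct and follows exactly the route the paper intends: the paper states the corollary as an immediate consequence of Theorem~\ref{thm:extradiv}, and your verification—substituting the lower bound on $\psi$ into the theorem's series (with $k'=\max\{k,2\}$, $\eps'<\eps$) and checking divergence of the resulting Bertrand-type series by the integral test—is precisely the omitted routine calculation, including the correct treatment of $\psi(q)=1/q$.
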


\begin{remark*}[On asymptotic notation]
  The notation $f(q)\ll g(q)$ means that there is some constant $C>0$
  such that $f(q) \leq C g(q)$ for all relevant $q$, that is, if
  $f(q) = O(g(q))$. We use $f(q)\asymp g(q)$ to mean that both
  $f(q) \ll g(q)$ and $g(q) \ll f(q)$ hold. 
\end{remark*}

\th~\ref{thm:extradiv} is reminiscent of a collection of extra
divergence
results~\cite{HPVextra,BHHVextraii,AistleitneretalExtraDivergence}
that were proved in the decade before Koukoulopoulos and Maynard's
2020 proof of the Duffin--Schaeffer
conjecture~\cite{KMDS,duffinschaeffer}. By direct analogy with the
last of these~\cite{AistleitneretalExtraDivergence}, one may expect
that the extra divergence in \th~\ref{thm:extradiv} can be relaxed to
$\sum \psi(q)(\log q)^{-\eps}=\infty$. However, the problems are quite
different, as are the methods of proof. The inhomogeneity of our
setting is directly responsible for the $\sqrt{\log q}$ appearing in
our extra divergence assumption, and it is not obvious how (or
whether) one can avoid it.

For readers interested in Duffin--Schaeffer-type problems, we remark
that the Duffin--Schaeffer conjecture with a moving target (i.e., a
Duffin--Schaeffer version of~\th~\ref{conj:movingKT}) has already been
ruled
out~\cite[Theorem~2]{hauke2024duffinschaefferconjecturemovingtarget}.

\subsection{Finitely centered target and monochromatic denominators}

We are able to prove~\th~\ref{conj:movingKT} unconditionally on $\psi$
if the target centers $\gamma = (\gamma_q)$ lie in a finite set. This
follows from the following more general statement.

\begin{theorem}\th\label{thm:finitetarget}
  Let $\set{\pi_1, \pi_2, \dots, \pi_\ell}$ be a partition of $\NN$,
  and $\sigma_1, \sigma_2, \dots, \sigma_\ell\in \RR$. If
  $\psi:\NN\to\RR_{\geq 0}$ is decreasing and $\sum\psi(q)=\infty$,
  then there exists $k \in\set{1, \dots, \ell}$ such that
  $\meas(W(\bone_{\pi_k}\psi, \sigma_k))=1$.
\end{theorem}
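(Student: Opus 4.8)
The plan is to argue by contradiction combined with an averaging/pigeonhole idea over the colour classes $\pi_1,\dots,\pi_\ell$. Suppose that $\meas(W(\bone_{\pi_k}\psi,\sigma_k)) < 1$ for every $k$. Since each $W(\bone_{\pi_k}\psi,\sigma_k)$ is a $\limsup$ set defined by a translation-invariant family of conditions $\norm{q\alpha-\sigma_k}<\psi(q)$ with $q\in\pi_k$, one expects a zero--one law to force $\meas(W(\bone_{\pi_k}\psi,\sigma_k)) = 0$ for each $k$. (Such a zero--one law follows, for instance, from Cassels' theorem on $\limsup$ sets of the form $\set{\alpha : \norm{q\alpha - \gamma_q} < \psi(q) \text{ i.o.}}$, or can be derived directly by an ergodicity argument for the relevant digit/continued-fraction dynamics; this is where I would first pin down the cleanest reference.) Then I would want to conclude that $\meas\parens*{\bigcup_{k=1}^\ell W(\bone_{\pi_k}\psi,\sigma_k)} = 0$, i.e.\ a \emph{null} set, and derive a contradiction from the fact that $\sum_q \psi(q) = \infty$ forces the combined set to be large.

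The key point making this work is a decomposition: whenever $\norm{q\alpha - \gamma}$ is small for \emph{some} choice of $\gamma$, we can compare it to $\norm{q\alpha - \sigma}$ for the finitely many fixed $\sigma = \sigma_k$. More precisely, I would run the following chain. First, note $\NN = \bigsqcup_{k=1}^\ell \pi_k$, so $\sum_{q\in\pi_k}\psi(q) = \infty$ for at least one $k$ --- but this alone does not give $\meas(W(\bone_{\pi_k}\psi,\sigma_k)) = 1$ because $\bone_{\pi_k}\psi$ need not be decreasing, so Szüsz's theorem does not apply directly. To fix the monotonicity issue, I would instead use the full strength of the \emph{homogeneous} plus \emph{inhomogeneous} Khintchine theorems together with a "thickening" trick: for the decreasing function $\psi$, the set $W(\psi, \sigma_k)$ has full measure for each fixed $k$ (by Szüsz), and one wants to show that at least one colour class $\pi_k$ already "captures" almost all of the approximations witnessing $\alpha \in W(\psi,\sigma_k)$. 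Here I would invoke a quasi-independence / variance estimate (second moment method) for the counting functions $R_N^{(k)}(\alpha) = \#\set{q\le N : q\in\pi_k,\ \norm{q\alpha-\sigma_k}<\psi(q)}$: the Erdős--Vaaler or Chung--Erdős type inequality would show that if $\sum_k \sum_{q\in\pi_k,\,q\le N}\psi(q) \to \infty$ (which holds, being $\sum_{q\le N}\psi(q)$), then $\limsup_N \sum_k R_N^{(k)}(\alpha) = \infty$ for a.e.\ $\alpha$, and a further pigeonhole in $k$ (finitely many classes!) upgrades this to: for a.e.\ $\alpha$ there is some $k=k(\alpha)$ with $R_N^{(k)}(\alpha)\to\infty$. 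Thus $\bigcup_k W(\bone_{\pi_k}\psi,\sigma_k)$ has full measure, and by the zero--one law each summand is either null or full, so some fixed $k$ gives $\meas(W(\bone_{\pi_k}\psi,\sigma_k)) = 1$.

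The main obstacle, and the step I would spend the most care on, is the variance/quasi-independence estimate for the sums $\sum_{q\in\pi_k} \bone_{\norm{q\alpha-\sigma_k}<\psi(q)}$ over a \emph{non-monotonic} index set $\pi_k$. For the genuinely monotonic inhomogeneous problem one has Szüsz's theorem and Sprindžuk's asymptotic count (\ref{prop:sprindzuk}); but restricting to $q\in\pi_k$ destroys monotonicity, and the correlations $\meas\parens*{\norm{q\alpha-\sigma_k}<\psi(q)} \cap \set{\norm{q'\alpha-\sigma_k}<\psi(q')}$ carry an extra $\gcd(q,q')$-type overlap term that must be controlled on average. I expect this to be handled by an overlap estimate of the shape $\sum_{q,q'\le N} (\text{overlap}) \ll \parens*{\sum_{q\le N}\psi(q)}^2 + (\text{lower order})$, valid once $\psi$ is monotonic on $\NN$ (so one can compare the $\pi_k$-restricted sums to the full sum), after which the Paley--Zygmund / Chung--Erdős inequality finishes the argument. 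A clean alternative to setting up the second moment from scratch is to deduce the needed "i.o.\ in some colour" statement directly from Cassels' zero--one law applied to the function $\gamma_q := \sigma_{k(q)}$ where $k(q)$ is the index with $q\in\pi_{k(q)}$: that $\gamma$ is a legitimate (bounded) moving target, the associated set $W(\psi,\gamma)$ equals $\bigcup_k W(\bone_{\pi_k}\psi,\sigma_k)$, and one needs only that this particular moving-target set has full measure --- which, crucially, does \emph{not} require the full Conjecture~\ref{conj:movingKT} because $\gamma$ takes only finitely many values and one can work one value at a time after an ergodic decomposition. I would present the second-moment route as the main argument and remark on the Cassels-type shortcut.
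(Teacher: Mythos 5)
There is a genuine gap, and it sits at the heart of your argument: the zero--one law you invoke does not exist. Cassels' zero--one law applies to the \emph{homogeneous} sets $W(\psi,0)$ only; for inhomogeneous limsup sets $\set{\alpha : \norm{q\alpha-\sigma}<\psi(q) \text{ i.o.}}$ (let alone restricted to a non-monotone index set $\pi_k$) no zero--one law is known --- the paper explicitly notes this is a well-known open problem. Your strategy is precisely the ``simple proof'' that works homogeneously (show the union over colours is full, deduce some colour class has positive measure, upgrade to full by a zero--one law), and it is exactly the step ``positive measure $\Rightarrow$ full measure'' that breaks down inhomogeneously. The paper avoids this by pigeonholing the colour class \emph{before} any measure computation (choosing $k$ so that $\sum_{q\in\pi_k\cap[1,Q]}\varphi(q)\psi(q)/q$ captures a fixed proportion of the full sum for infinitely many $Q$) and then applying the Beresnevich--Hauke--Velani full-measure criterion (\th\ref{BHV}), whose local uniformity hypothesis is verified via equidistribution of the shifted reduced fractions $S(q)/q$; this delivers measure one directly, with no zero--one law. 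Your alternative ``Cassels-type shortcut'' via $\gamma_q:=\sigma_{k(q)}$ is circular: that $W(\psi,\gamma)$ has full measure for this finitely-valued moving target is essentially the statement being proved (it is \th\ref{cor:finite}), and ``working one value at a time after an ergodic decomposition'' is not an available tool here.

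A second, quieter gap is in the variance estimate you defer. The naive overlap bound carries an error term $\gcd(q,r)\meas(A_q)/q$, and summing it gives roughly $\sum_q \psi(q)\,d(q)$, which can dominate $\bigl(\sum_q\psi(q)\bigr)^2$ when $\sum\psi(q)$ diverges slowly --- this is exactly why the paper's \th\ref{thm:extradiv} needs extra divergence. For a \emph{fixed} target $\sigma_k$ the paper kills this term not by monotonicity alone but by Schmidt's shifted reduction: one restricts the numerators to the set $S(q)$ of residues $a$ with $(aB+A,q)=1$, where $A/B$ is a good rational approximation to $\sigma_k$, and the count in \th\ref{lem:N} then shows the dangerous $\gcd$-contributions are $\ll qd^{-1/4}+d$ per divisor class. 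Without this reduction (which also requires first replacing $\psi$ by $\min\set{\psi,1/q}$, as in \th\ref{lem:wlog}), the Chung--Erd\H{o}s/second-moment step you sketch does not close. So both pillars of your proposal --- the QIA input and the measure-upgrading step --- need to be replaced by the paper's specific mechanisms.
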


As immediate corollary, there is the following statement
addressing~\th~\ref{conj:movingKT}.

\begin{corollary}[Khintchine's theorem with a finitely centered moving target]\label{cor:finite}
  \th~\ref{conj:movingKT} is true if the sequence $(\gamma_q)_q$ is
  contained in a finite subset
  \begin{equation*}
    (\gamma_q)_q\subset\set{\sigma_1, \sigma_2, \dots,
      \sigma_\ell}\subset \RR.
  \end{equation*}
  In this case, more is true: with $\psi$ as
  in~\th~\ref{conj:movingKT}, there exists $\sigma \in\set{\sigma_1, \dots, \sigma_\ell}$ 
  such that $\meas(W(\bone_{\set{\gamma_q = \sigma}}\psi, \gamma))=1$.
\end{corollary}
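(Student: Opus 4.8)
The plan is to read the statement off from \th~\ref{thm:finitetarget}, which by assumption is available. Let $\psi$ and $\gamma=(\gamma_q)_q$ be as in \th~\ref{conj:movingKT} (so $\psi$ is decreasing and $\sum\psi(q)=\infty$), and suppose in addition that $(\gamma_q)_q$ takes only finitely many values; let $\sigma_1,\dots,\sigma_\ell$ be those values and set $\pi_k=\set*{q\in\NN:\gamma_q=\sigma_k}$ for $k=1,\dots,\ell$. First I would observe that $\set{\pi_1,\dots,\pi_\ell}$ is a partition of $\NN$ into nonempty (possibly finite) parts --- exactly the data \th~\ref{thm:finitetarget} takes as input --- so it yields an index $k$ with $\meas(W(\bone_{\pi_k}\psi,\sigma_k))=1$.

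It then remains only to translate this back into the language of the conjecture. Writing $\sigma:=\sigma_k$, we have $\pi_k=\set{q:\gamma_q=\sigma}$ and $\gamma_q=\sigma$ for every $q\in\pi_k$, while $\bone_{\pi_k}\psi$ vanishes off $\pi_k$; hence $W(\bone_{\pi_k}\psi,\sigma)=W(\bone_{\set{\gamma_q=\sigma}}\psi,\gamma)$, which therefore has full measure --- this is the sharper assertion. Moreover any $\alpha$ in this set satisfies $\norm{q\alpha-\gamma_q}<\psi(q)$ for the infinitely many $q\in\pi_k$ realizing $\norm{q\alpha-\sigma}<\psi(q)$, so $W(\bone_{\set{\gamma_q=\sigma}}\psi,\gamma)\subseteq W(\psi,\gamma)$ and $\meas(W(\psi,\gamma))=1$, giving \th~\ref{conj:movingKT} for this $\gamma$. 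There is no genuine obstacle: all of the work sits inside \th~\ref{thm:finitetarget}, and the only points to verify --- that the level sets of $(\gamma_q)_q$ partition $\NN$, and that replacing $\gamma$ by a single constant center on one level set only shrinks the $\limsup$ set --- are immediate.
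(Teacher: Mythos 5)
Your proposal is correct and is essentially the paper's own proof: the paper simply defines $\pi_k=\set{q\in\NN:\gamma_q=\sigma_k}$ and applies \th~\ref{thm:finitetarget}. Your extra remarks---that the level sets of $(\gamma_q)_q$ partition $\NN$, that $W(\bone_{\pi_k}\psi,\sigma_k)=W(\bone_{\set{\gamma_q=\sigma}}\psi,\gamma)$, and that this set sits inside $W(\psi,\gamma)$---just make explicit the translation the paper leaves implicit.
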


\begin{proof}
  Define $\pi_k = \set{q \in \NN : \gamma_q = \sigma_k}$ and
  apply~\th~\ref{thm:finitetarget}. 
\end{proof}

Another corollary to~\th~\ref{thm:finitetarget} is the following,
which can be expressed in terms of colorings of $\NN$. If every
$q\in\NN$ is assigned one of finitely-many possible colors, and $\psi$
is a monotonic function such that $\sum\psi(q)$ diverges, then there
is some color---say, orange---for which almost every real number is
$\psi$-approximable by rationals with orange denominators.

\begin{corollary}[Inhomogeneous Khintchine theorem with
  monochromatic denominators]\th\label{monochromatic}
  Let $\gamma\in\RR$ and $\set{\pi_1, \dots, \pi_\ell}$ a partition of
  $\NN$. Let $\psi$ be as in~\th~\ref{conj:movingKT}. Then there
  exists a partition element $\pi \in \set{\pi_1, \dots, \pi_\ell}$
  such that $\meas(W(\bone_{\pi}\psi, \gamma))=1$. 
\end{corollary}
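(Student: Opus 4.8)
The plan is to obtain this as an immediate consequence of \th~\ref{thm:finitetarget}. Given the partition $\set{\pi_1, \dots, \pi_\ell}$ of $\NN$ and the fixed inhomogeneous parameter $\gamma \in \RR$, I would invoke \th~\ref{thm:finitetarget} with the constant choice $\sigma_1 = \sigma_2 = \dots = \sigma_\ell = \gamma$. Since $\psi$ is decreasing and $\sum\psi(q) = \infty$, the hypotheses of that theorem are satisfied, so it produces an index $k \in \set{1, \dots, \ell}$ for which $\meas(W(\bone_{\pi_k}\psi, \sigma_k)) = 1$; because $\sigma_k = \gamma$ this says $\meas(W(\bone_{\pi_k}\psi, \gamma)) = 1$, and taking $\pi = \pi_k$ completes the proof.

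At the level of the corollary there is no real obstacle: \th~\ref{thm:finitetarget} already does all the work, and the present statement is simply the special case in which the finitely many centers happen to coincide. (This is the same maneuver that deduces Corollary~\ref{cor:finite} from \th~\ref{thm:finitetarget}, there using $\pi_k = \set{q \in \NN : \gamma_q = \sigma_k}$.) The genuine difficulty is upstream, in \th~\ref{thm:finitetarget} itself: the naive reduction---``some $\pi_{k_0}$ carries a divergent subseries $\sum_{q \in \pi_{k_0}}\psi(q)$, so apply the inhomogeneous Khintchine theorem to $\bone_{\pi_{k_0}}\psi$''---fails because $\bone_{\pi_{k_0}}\psi$ need not be monotonic, whereas the classical theorem quoted above requires monotonicity. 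Overcoming this, presumably via a quantitative second-moment estimate robust enough to survive restriction to an arbitrary subsequence of denominators while exploiting that only finitely many centers are in play, is where the effort must go; but that is precisely what \th~\ref{thm:finitetarget} supplies, so nothing further is required for the corollary.
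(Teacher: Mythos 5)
Your proof is correct and is exactly the paper's own argument: apply \th~\ref{thm:finitetarget} with $\sigma_1=\dots=\sigma_\ell=\gamma$ and take $\pi=\pi_k$ for the index $k$ it produces. Nothing further is needed.
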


\begin{proof}
  Apply~\th~\ref{thm:finitetarget} with $\sigma_k=\gamma$ for
  $k=1, \dots, \ell$.
\end{proof}

\begin{remark*}
  The homogeneous version of~\th\ref{monochromatic} admits a simpler
  proof. One notes that
  \begin{equation*}
    W(\psi,0) = W(\bone_{\pi_1}\psi, 0) \cup\dots\cup W(\bone_{\pi_\ell}\psi, 0).
  \end{equation*}
  By Khintchine's theorem, $W(\psi,0)$ has full measure, hence for
  some $k=1, \dots, \ell$, one of the $W(\bone_{\pi_k}\psi, 0)$ has
  positive measure. Then, by Cassels' zero-one
  law~\cite[Theorem~I]{Casselslemma}, it has full measure. But there
  is no inhomogeneous zero-one law. It is a well-known problem to prove one.
\end{remark*}

One can compare \th~\ref{monochromatic} with results in~\cite{Harman}
where Khintchine-type statements are proved under restrictions on
numerators and denominators. Among those, it is shown that if
$D\subset\NN$ is a set with positive lower asymptotic density, and
$\psi$ is a monotonic function such that $\sum\psi(q)$ diverges, then
almost every $\alpha$ is $\psi$-approximable by rationals with
denominators lying in $D$. The monochromatic set
in~\th~\ref{monochromatic} need not have positive density, but it may
depend on $\psi$. This dependence is unavoidable, as the following
simple example shows.

Let $\gamma\in\RR$ and consider the partition $\NN=\pi_0\cup\pi_1$ where
\begin{equation*}
  \pi_0 = \bigcup_{\substack{k\geq 0 \\ k \textrm{ even}}}[2^{2^k}, 2^{2^{k+1}})\cap \NN \qquad\textrm{and}\qquad
  \pi_1 = \set{1}\cup\bigcup_{\substack{k\geq 0 \\ k \textrm{ odd}}}[2^{2^k}, 2^{2^{k+1}})\cap \NN.
\end{equation*}
Let
\begin{equation*}
  \psi_0(q) =
  \begin{cases}
    2^{-2^{k+1}} &\textrm{if } q\in [2^{2^k}, 2^{2^{k+1}}), \textrm{with $k\geq 0$ even} \\
    q^{-2} &\textrm{otherwise}
  \end{cases}
\end{equation*}
and
\begin{equation*}
  \psi_1(q) =
  \begin{cases}
    2^{-2^{k+1}} &\textrm{if } q\in [2^{2^k}, 2^{2^{k+1}}), \textrm{with $k\geq 0$ odd} \\
    q^{-2} &\textrm{otherwise.}
  \end{cases}
\end{equation*}
Then both $\psi_0$ and $\psi_1$ are decreasing and have diverging
sums. Since $\sum q^{-2}$ converges, the Borel--Cantelli lemma implies
that
$\meas(W(\bone_{\pi_0}\psi_1, \gamma)) = \meas(W(\bone_{\pi_1}\psi_0, \gamma)) =
0$. Therefore, the monochromatic set coming
from~\th~\ref{monochromatic} is $\pi_0$ for $\psi_0$ and $\pi_1$ for
$\psi_1$---that is, it must depend on the approximating function.

\subsection{Other remarks}

It is worth noting that other natural variants
of~\th~\ref{conj:movingKT} are also open. For instance, it does not
seem easier to prove the conjecture in the case where the sequence
$\gamma = (\gamma_q)$ converges to some $\gamma_0\in\RR$. But for
convergent sequences, one can easily name conditions under which
$W(\psi, \gamma)$ has full measure. For example, if
$\abs{\gamma_q - \gamma_0} \ll \psi(q)$ for all $q$, then, by the
triangle inequality, $W(\psi, \gamma_0)\subset W(C\psi, \gamma)$ for
some $C>0$. The inhomogeneous Khintchine theorem gives
$\meas(W(\psi, \gamma_0))=1$, hence $\meas(W(C\psi, \gamma))=1$, and
the result follows from Cassels'
lemma~\cite[Lemma~9]{Casselslemma}. This idea can be
pushed. In~\th\ref{conj:movingKT} we can assume without loss of
generality that $\psi(q) \geq c(q)$ for every nonincreasing sequence
$c(q)$ such that $\sum c(q) < \infty$. (One simply replaces $\psi(q)$
with $\max\set{\psi(q),c(q)}$. The divergence condition persists, and
the difference in the resulting limsup sets has measure $0$ by the
convergence of $\sum c(q)$ and the Borel--Cantelli lemma.) The
preceding argument now implies that~\th~\ref{conj:movingKT} holds for
$\gamma = (\gamma_q)$ if there exists $\gamma_0\in\RR$ such that
$\sum_q \abs{\gamma_q - \gamma_0} < \infty$. We leave open the
question for sequences with a slower rate of convergence.

A similar argument shows that if there is a dense set
$\calQ\subset\RR$ such that~\th~\ref{conj:movingKT} holds for all
moving targets whose centers lie in $\calQ$, then the full conjecture
holds. For suppose $\gamma:=(\gamma_q)\subset\RR$ and $\psi$ is
decreasing with $\sum \psi(q)=\infty$. By the density of $\calQ$,
there exists a sequence $\sigma:=(\sigma_q)_q\subset\calQ$ such that
$\abs{\gamma_q - \sigma_q} \leq \psi(q)$ for all $q$. Again the
triangle inequality shows that $W(\psi, \sigma)$ is contained in
$W(2\psi, \gamma)$. The former has full measure by assumption,
therefore so does $W(2\psi, \gamma)$, and the result follows Cassels'
lemma as before. This reduces~\th~\ref{conj:movingKT} to the case
where the target centers are, say, rational.

The reduction to dense subsets of $\RR$ is so immediate that one may
take it as a hint that \th~\ref{conj:movingKT} for rational
sequences---or sequences contained in any other dense subset
$\calQ\subset\RR$---is no easier than the full conjecture. If so, we
ask the following intermediate question: Can one find a set
$\calS\subset \RR$ such that $\calS\, (\bmod 1)$ is infinite and such that \th~\ref{conj:movingKT} holds for all
sequences $\gamma:=(\gamma_q)\subset \calS$?

\section{Measure theoretic statements}

In this section we gather some general measure-theoretic results that
are indispensable in this work. The sets $W(\psi,\gamma)$ are best
understood as limsup sets, and the results listed in this section are
tools for showing that the measures of limsup sets are large.

The first is often called the Divergence Borel--Cantelli lemma, and
can be found in various sources, such as \cite[Lemma~2.3]{Harman}. It
is often attributed to Erd{\H o}s--Renyi~\cite{ErdosRenyiBC}.

\begin{proposition}[Divergence Borel--Cantelli
  lemma]\th\label{divergenceBC}
  Let $(X,\mu)$ be a finite measure space and $(A_q)_{q\geq 1}$ a
  sequence of measurable subsets of $X$ such that
  $\sum_{q\geq 1}\mu(A_q)=\infty$. Then
\begin{equation*}
    \mu(\limsup_{q\to\infty}A_q) \geq \limsup_{Q\to\infty}\frac{\parens*{\sum_{q\leq Q}\mu(A_q)}^2}{\sum_{q,r\leq Q}\mu(A_q\cap A_r)}.
\end{equation*}
\end{proposition}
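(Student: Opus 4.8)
The plan is to use the second-moment ($L^2$) method: apply the Cauchy--Schwarz inequality to the partial counting functions to bound the measures of the finite unions $\bigcup_{q\leq Q}A_q$ from below, and then promote this to a bound on the limsup set by a tail argument that crucially exploits $\mu(X)<\infty$.

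First I would set, for each $Q\geq 1$, the counting function $f_Q=\sum_{q\leq Q}\bone_{A_q}$, so that $\int_X f_Q\,d\mu=\sum_{q\leq Q}\mu(A_q)$ and $\int_X f_Q^2\,d\mu=\sum_{q,r\leq Q}\mu(A_q\cap A_r)$. Since $f_Q$ is supported on $\bigcup_{q\leq Q}A_q$, Cauchy--Schwarz gives $\bigl(\int_X f_Q\,d\mu\bigr)^2\leq \mu\bigl(\bigcup_{q\leq Q}A_q\bigr)\int_X f_Q^2\,d\mu$, that is,
\begin{equation*}
  \mu\Bigl(\bigcup_{q\leq Q}A_q\Bigr)\geq \frac{\bigl(\sum_{q\leq Q}\mu(A_q)\bigr)^2}{\sum_{q,r\leq Q}\mu(A_q\cap A_r)}.
\end{equation*}
Taking $\limsup_{Q\to\infty}$ already yields the asserted inequality, but with $\bigcup_{q\geq 1}A_q$ in place of $\limsup_q A_q$.

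To upgrade to the limsup set, I would fix $N\geq 1$ and rerun the same estimate on the tail sequence $(A_q)_{q\geq N}$, obtaining $\mu\bigl(\bigcup_{q\geq N}A_q\bigr)\geq \limsup_{Q}S_N(Q)^2/T_N(Q)$, where $S_N(Q)=\sum_{N\leq q\leq Q}\mu(A_q)$ and $T_N(Q)=\sum_{N\leq q,r\leq Q}\mu(A_q\cap A_r)$. The point is that this limsup does not depend on $N$. Indeed, since $\sum\mu(A_q)=\infty$ we have $S_1(Q)\to\infty$, so $S_N(Q)/S_1(Q)\to 1$; moreover $0\leq T_1(Q)-T_N(Q)\leq 2(N-1)\,S_1(Q)$ because $\mu(A_q\cap A_r)\leq\mu(A_r)$ and $\mu(X)<\infty$, while the global Cauchy--Schwarz bound $T_1(Q)\geq S_1(Q)^2/\mu(X)$ forces $T_1(Q)$ to grow faster than $S_1(Q)$, whence $T_N(Q)/T_1(Q)\to 1$. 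Therefore $\limsup_Q S_N(Q)^2/T_N(Q)=\limsup_Q S_1(Q)^2/T_1(Q)=:L$ for every $N$. Finally, the sets $\bigcup_{q\geq N}A_q$ decrease to $\limsup_q A_q$, so continuity of the finite measure $\mu$ from above gives $\mu(\limsup_q A_q)=\lim_N\mu\bigl(\bigcup_{q\geq N}A_q\bigr)\geq L$, which is the claim.

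The one nonroutine step is the tail argument of the last paragraph: the naive second-moment estimate only controls finite unions, and the subtlety is that deleting the first $N-1$ sets perturbs both the numerator and the (larger) denominator of the ratio. The resolution is the observation that diverging $L^1$ mass forces the $L^2$ mass to diverge at least as fast as its square, so the finitely many omitted cross terms are negligible in the limit; this is precisely where the finiteness of $\mu$ is used.
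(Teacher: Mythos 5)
Your argument is correct. The paper does not prove Proposition~\ref{divergenceBC} at all: it quotes it from the literature (Harman's Lemma~2.3, attributed to Erd\H{o}s--R\'enyi), so there is no in-paper proof to compare against; what you have written is essentially the standard Kochen--Stone/Erd\H{o}s--R\'enyi second-moment argument. Your finite-$Q$ step, obtained from Cauchy--Schwarz applied to $f_Q=\sum_{q\leq Q}\bone_{A_q}$, is exactly the Chung--Erd\H{o}s inequality, which the paper records separately as Proposition~\ref{prop:erdoschung}; so in effect you have shown how Proposition~\ref{divergenceBC} follows from Proposition~\ref{prop:erdoschung} together with a tail argument. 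The tail argument is sound: the deleted cross terms satisfy $T_1(Q)-T_N(Q)\leq 2(N-1)S_1(Q)$ simply because $\mu(A_q\cap A_r)\leq\mu(A_r)$ (finiteness of $\mu$ is not needed for that particular bound), while $T_1(Q)\geq S_1(Q)^2/\mu(X)$ and $S_1(Q)\to\infty$ force $T_N(Q)/T_1(Q)\to 1$, and finiteness of $\mu$ is genuinely used there and in the continuity-from-above step $\mu(\limsup_q A_q)=\lim_N\mu\bigl(\bigcup_{q\geq N}A_q\bigr)$. One cosmetic point: for small $Q$ the ratio $S_N(Q)^2/T_N(Q)$ may be $0/0$; since $\sum_q\mu(A_q)=\infty$ guarantees $T_N(Q)\geq S_N(Q)>0$ for all large $Q$, the limsup is well defined, and it is worth saying so explicitly.
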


If the right-hand side of the inequality appearing in~\th\ref{divergenceBC} is positive, one says that the sets $A_q$ are \emph{quasi-independent on average (QIA)}. Quasi-independence on average is the crux of many results in metric Diophantine approximation, including the main results here. Once QIA is established, one has a positive-measure limsup set. 

Closely related to the Divergence Borel--Cantelli lemma is the Erd{\H o}s--Chung lemma. 

\begin{proposition}[Chung--Erd\H{o}s Lemma,~\cite{chungerdos}]\th\label{prop:erdoschung}
  If $(X, \mu)$ is a finite measure space and
  $(A_q)_{q \in \NN}\subset X$ is a sequence of measurable subsets
  such that $0 < \sum_{q} \mu(A_q) < \infty$, then
  \begin{equation*}
    \mu\parens*{\bigcup_{q=1}^\infty  A_q} \geq \frac{\parens*{\sum_{q=1}^\infty \mu(A_q)}^2}{\sum_{q,r=1}^\infty \mu(A_q\cap A_r)}.
  \end{equation*}
\end{proposition}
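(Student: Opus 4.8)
The plan is to prove the finite-index version first, by applying the Cauchy--Schwarz inequality to the function that counts membership in the $A_q$, and then to pass to the limit. First I would fix $N \in \NN$ and set $f_N = \sum_{q=1}^N \bone_{A_q}$. This is a nonnegative measurable function that is strictly positive precisely on $U_N := \bigcup_{q=1}^N A_q$, and two integral identities are immediate: $\int_X f_N\,d\mu = \sum_{q=1}^N \mu(A_q)$, and, using $\bone_{A_q}\bone_{A_r} = \bone_{A_q\cap A_r}$, $\int_X f_N^2\,d\mu = \sum_{q,r=1}^N \mu(A_q\cap A_r)$.

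The one genuinely substantive step is to apply Cauchy--Schwarz \emph{on the support set} $U_N$, not on all of $X$. Since $f_N$ vanishes outside $U_N$,
\[
  \parens*{\sum_{q=1}^N \mu(A_q)}^2 = \parens*{\int_{U_N} f_N \,d\mu}^2 \leq \mu(U_N) \int_{U_N} f_N^2\,d\mu = \mu(U_N)\sum_{q,r=1}^N \mu(A_q\cap A_r),
\]
and it is precisely this localization to $U_N$ that causes $\mu(U_N)$ rather than $\mu(X)$ to appear. For all large enough $N$ the right-hand double sum is positive (it is at least $\sum_{q\leq N}\mu(A_q)$, which tends to the positive number $\sum_q \mu(A_q)$), so dividing gives the finite-index inequality
\[
  \mu\parens*{\bigcup_{q=1}^N A_q} \geq \frac{\parens*{\sum_{q=1}^N \mu(A_q)}^2}{\sum_{q,r=1}^N \mu(A_q\cap A_r)}.
\]

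Finally I would let $N \to \infty$. The left-hand side increases to $\mu\parens*{\bigcup_{q=1}^\infty A_q}$ by continuity of the finite measure $\mu$ from below. On the right, the numerator increases to $\parens*{\sum_q \mu(A_q)}^2 \in (0,\infty)$ by hypothesis, and the denominator increases to $\sum_{q,r=1}^\infty \mu(A_q\cap A_r)$ since its terms are nonnegative; if this limit is $+\infty$ the asserted bound is trivially $0$, and otherwise the quotient converges to exactly the stated right-hand side. This yields the proposition.

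There is no deep obstacle: the whole content is the Cauchy--Schwarz estimate. The only points needing care are that it must be applied over the support $U_N$ (so that the union, and not the ambient space, enters the bound) and that the passage to the limit should separately record the degenerate case in which $\sum_{q,r}\mu(A_q\cap A_r)$ diverges, where the inequality is vacuous.
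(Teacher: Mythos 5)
Your proof is correct. The paper does not prove this proposition at all: it cites Chung--Erd\H{o}s for the finite version and remarks that the extension to infinitely many sets is a simple limiting argument (referring to Proposition~3 of the Duffin--Schaeffer paper of the second author). Your argument --- Cauchy--Schwarz applied to $f_N=\sum_{q\leq N}\bone_{A_q}$ over its support $U_N$, followed by continuity of $\mu$ from below and the monotone convergence of the numerator and denominator sums, with the degenerate case of a divergent denominator noted separately --- is exactly that standard route, and you correctly handle the only delicate points (localizing Cauchy--Schwarz to the union and the positivity of the double sum before dividing).
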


\begin{remark*}
    This is usually stated with finitely many sets $A_1, \dots, A_Q$. The version stated above is a simple extension. (See~\cite[Proposition~3]{ramirez2023duffinschaeffer}.)
\end{remark*}

The challenge in applying \th\ref{divergenceBC,prop:erdoschung} to problems in Diophantine approximation comes from the fact that the sets arising in these problems are far from independent in general. Commonly, rather than having a bound of the form $\mu(A_q\cap A_r) \ll \mu(A_q)\mu(A_r)$, which would be useful for the application of the above results, one instead tends to find an extra term, 
\begin{equation}\label{eq:extraterm}
    \mu(A_q\cap A_r) \ll \mu(A_q)\mu(A_r) + \textrm{extra},
\end{equation}
and the problem becomes to control that term. In homogeneous problems,
the extra term comes from rational numbers expressible with both
denominator $q$ and denominator $r$, and it can be dealt with by
requiring that all rational numbers be expressed in reduced
form. (This then leads to other difficulties. It is the reason that a
monotonicity assumption appears in Khintchine's theorem, and that
monotonicity assumption in turn gave rise to the Duffin--Schaeffer
conjecture~\cite{duffinschaeffer}, a problem with a rich 80-year
history~\cite{duffinschaeffer,AistleitnerDS,HPVextra,PollingtonVaughan,BHHVextraii,BVmassTP,chow2023littlewood}
culminating in a proof by Koukoulopoulos and Maynard~\cite{KMDS} and
inspiring follow-up
work~\cite{ramirez2023duffinschaeffer,hauke2024duffinschaefferconjecturemovingtarget}.)

For inhomogeneous problems, simply reducing fractions does not remove
the extra term in~\eqref{eq:extraterm}. Nevertheless, if the
inhomogeneous parameter is fixed, then one can benefit by a shifted
kind of reduction, as we will see in the proof
of~\th\ref{thm:finitetarget}, which uses a reduction of
Schmidt~\cite{Schmidt}. In the moving target setting, where the
inhomogeneous parameter is allowed to depend on the denominator, it
becomes even harder to handle the extra term. The following result
shows that it can be absorbed if the sum of measures of $A_q$ exhibits
extra divergence. This is an abstraction of Yu's proof
of~\cite[Theorem~1.8]{Yu}.

\begin{proposition}\th\label{prop:abstractyu}
  For each $q\geq 1$, let $A_q\subset X$, where 
  $X$ is a metric space with a finite measure $\mu$ such that every open set is $\mu$-measurable.
  Suppose $\eta:\NN^2\to[1,\infty)$ is such that
  \begin{equation}\label{eq:generaloverlaps}
    \mu(A_q\cap A_r) \leq\kappa \mu(A_q)(\mu(A_r) + \eta(q,r))\qquad (1 \leq r\leq q)
  \end{equation}
  for some $\kappa>0$.  Suppose $U\subset X$ is an open set and that
  \begin{equation*}
      \mu(A_q\cap U) \geq \frac{1}{2}\mu(A_q)\mu(U) 
  \end{equation*}
  for all sufficiently large $q$. Let $\eta(q) = \sum_{r\leq q} \eta(q,r)$ and
  $h:[1,\infty)\to [1,\infty)$ increasing and such that
  \begin{equation}
    \label{eq:hconv}
    h(2q) \ll h(q) \quad\textrm{and}\quad \sum_{\ell \geq 0} \frac{1}{h(2^\ell)} < \infty.
  \end{equation}
 If
  \begin{equation}\label{eq:etasum}
    \sum_{q=1}^\infty \frac{\mu(A_q)}{\eta(q)h(\eta(q))} = \infty
  \end{equation}
  then $\mu(\limsup A_q\cap U)\geq \mu(U)^2/(4\kappa)$.
\end{proposition}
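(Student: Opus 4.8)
The plan is to extract from the divergent series $\sum \mu(A_q)/(\eta(q)h(\eta(q)))$ a "well-spaced" subsequence of indices along which the extra term $\eta(q,r)$ is genuinely controlled, and then apply the Chung--Erd\H{o}s lemma (\th\ref{prop:erdoschung}) to the intersected sets $A_q \cap U$. The dyadic structure in \eqref{eq:hconv} is the key: since $\eta(q) = \sum_{r\leq q}\eta(q,r)$ is (after discarding a convergent tail, if necessary) essentially nondecreasing in $q$, I would partition $\NN$ into blocks $B_\ell = \set{q : \eta(q) \in [2^\ell, 2^{\ell+1})}$ for $\ell\geq 0$. On $B_\ell$ we have $\eta(q)\asymp 2^\ell$ and $h(\eta(q))\asymp h(2^\ell)$, so the divergence hypothesis \eqref{eq:etasum} becomes $\sum_\ell \frac{1}{2^\ell h(2^\ell)}\sum_{q\in B_\ell}\mu(A_q) = \infty$. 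Since $\sum_\ell 1/h(2^\ell) < \infty$, this forces $\sum_{q\in B_\ell}\mu(A_q)$ to be large (unbounded, in fact $\gg 2^\ell$ along a subsequence of $\ell$'s).

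Next I would run a Chung--Erd\H{o}s estimate \emph{within a single block} $B_\ell$, applied to the sets $\set{A_q\cap U}_{q\in B_\ell}$. For $q,r\in B_\ell$ with $r\leq q$, hypothesis \eqref{eq:generaloverlaps} and $\mu(A_q\cap U)\geq \tfrac12\mu(A_q)\mu(U)$ give
\begin{equation*}
  \mu(A_q\cap A_r\cap U) \leq \mu(A_q\cap A_r) \leq \kappa\mu(A_q)\mu(A_r) + \kappa\mu(A_q)\eta(q,r),
\end{equation*}
so summing over $q,r\in B_\ell$ the first term contributes $\leq \kappa\bigl(\sum_{q\in B_\ell}\mu(A_q)\bigr)^2$ and the second contributes $\leq \kappa\sum_{q\in B_\ell}\mu(A_q)\eta(q) \leq 2^{\ell+1}\kappa\sum_{q\in B_\ell}\mu(A_q)$. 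Meanwhile the numerator is $\bigl(\sum_{q\in B_\ell}\mu(A_q\cap U)\bigr)^2 \geq \tfrac14\mu(U)^2\bigl(\sum_{q\in B_\ell}\mu(A_q)\bigr)^2$. Writing $S_\ell := \sum_{q\in B_\ell}\mu(A_q)$, \th\ref{prop:erdoschung} yields
\begin{equation*}
  \mu\Bigl(\bigcup_{q\in B_\ell}(A_q\cap U)\Bigr) \geq \frac{\tfrac14\mu(U)^2 S_\ell^2}{\kappa S_\ell^2 + 2^{\ell+1}\kappa S_\ell} = \frac{\mu(U)^2}{4\kappa}\cdot\frac{S_\ell}{S_\ell + 2^{\ell+1}}.
\end{equation*}
For those $\ell$ in the subsequence where $S_\ell \gg 2^\ell$, the last fraction is bounded below by a positive constant, so $\mu\bigl(\bigcup_{q\in B_\ell}(A_q\cap U)\bigr) \geq c\,\mu(U)^2/(4\kappa)$ for infinitely many $\ell$.

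Finally I would upgrade this "positive measure in infinitely many blocks" into "the limsup has measure $\geq \mu(U)^2/(4\kappa)$." The clean way is to not fix the constant $c$ but instead, for any finite collection of blocks, combine them: applying the same Chung--Erd\H{o}s computation to $\bigcup_{\ell \in F}B_\ell$ for a finite index set $F$ (the cross-block overlaps $\eta(q,r)$ with $r\in B_{\ell'}$, $q\in B_\ell$, $\ell'<\ell$ are still accounted for by the single-index sum $\sum_r\eta(q,r) = \eta(q) \asymp 2^\ell$), one gets $\mu\bigl(\bigcup_{\ell\in F, q\in B_\ell}(A_q\cap U)\bigr) \geq \frac{\mu(U)^2}{4\kappa}\cdot\frac{\sum_{\ell\in F}S_\ell}{\sum_{\ell\in F}S_\ell + \sum_{\ell\in F}2^{\ell+1}}$, and by choosing $F$ inside the good subsequence with $\sum_{\ell\in F}S_\ell$ arbitrarily large relative to $\sum_{\ell\in F}2^{\ell+1}$ one pushes the bound up to $\mu(U)^2/(4\kappa) - o(1)$. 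A standard reverse-Fatou / tail argument (replacing $A_q$ by $\bigcup_{q\geq N}A_q$ and letting $N\to\infty$, as in the proof of the divergence Borel--Cantelli lemma) then transfers the bound to $\mu(\limsup A_q \cap U)$. The main obstacle I anticipate is precisely this last bookkeeping: ensuring that when blocks are combined, the quadratic term in the Chung--Erd\H{o}s denominator stays $\asymp (\sum S_\ell)^2$ rather than blowing up, which requires that the good subsequence of $\ell$'s be chosen so that the $S_\ell$'s (and $2^\ell$'s) grow geometrically — this is where the monotonicity of $\eta(q)$ in $q$, and hence the fact that distinct blocks $B_\ell$ occupy disjoint \emph{intervals} of $\NN$, gets used in an essential way. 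A secondary subtlety is handling the "all sufficiently large $q$" in the hypothesis $\mu(A_q\cap U)\geq\tfrac12\mu(A_q)\mu(U)$ and the possibility that $\eta(q)$ is not genuinely monotone, both of which are dealt with by discarding a finite (or convergent-sum) initial/exceptional set of indices without affecting \eqref{eq:etasum}.
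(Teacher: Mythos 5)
Your proposal is correct and follows essentially the same route as the paper's proof: decompose $\NN$ into the dyadic blocks $\{q : 2^\ell \le \eta(q) < 2^{\ell+1}\}$, run the Chung--Erd\H{o}s lemma on the sets $A_q\cap U$ within a block, use $\sum_\ell 1/h(2^\ell)<\infty$ to force $S_\ell/2^\ell$ to be unbounded, and transfer the resulting lower bound on infinitely many (pairwise disjoint) block-unions to $\limsup_q (A_q\cap U)$ by a reverse-Fatou/continuity-from-above argument. Your final block-combining step, together with the appeal to monotonicity of $\eta(q)$ and to the blocks being intervals of $\NN$ (neither of which holds in general, nor is needed), is superfluous: along a subsequence one has $S_\ell/2^\ell\to\infty$, so the single-block bound already tends to the full constant $\mu(U)^2/(4\kappa)$, exactly as in the paper; and the one case the paper treats separately---a single block with divergent measure sum, handled via the divergence Borel--Cantelli lemma---cannot actually arise under the stated hypotheses, since $\eta\ge 1$ gives $\eta(q)\ge q$ and hence finite blocks, so your implicit use of finite block sums in the Chung--Erd\H{o}s lemma is harmless.
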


\begin{proof}
  Following Yu's argument
  in~\cite{Yu}, we define for each $\ell\geq 0$ the set
  \begin{equation*}
    D_\ell = \set{q\in\NN : 2^\ell \leq \eta(q) < 2^{\ell+1}}. 
  \end{equation*}
  If there exists $\ell\geq 0$ for which
  $\sum_{q\in D_\ell}\mu(A_q)=\infty$, then we may restrict attention
  to $D_\ell$.  By~(\ref{eq:generaloverlaps}),
  \begin{align*}\label{eq:generalavgoverlaps}
    \sum_{\substack{q,r=1 \\ q,r\in D_\ell}}^Q\mu(A_q\cap A_r\cap U)
    &\leq \sum_{\substack{q,r=1 \\ q,r\in D_\ell}}^Q\mu(A_q\cap A_r) \\
    &\leq 2\sum_{\substack{r < q \\ q,r \in D_\ell}}\parens*{\kappa \mu(A_q)(\mu(A_r) + \eta(q,r)} + 2\kappa \sum_{q\in D_\ell} \mu(A_q)  \\
    &\leq \kappa \parens*{\sum_{\substack{q=1 \\ q\in D_\ell}}^Q\mu(A_q)}^2(1+o(1)) + 2\kappa \sum_{\substack{q=1 \\ q\in D_\ell}}^Q\mu(A_q)\eta(q) \\
    &\leq \kappa \parens*{\sum_{\substack{q=1 \\ q\in D_\ell}}^Q\mu(A_q)}^2(1 + o(1)) + 2 \kappa \sum_{\substack{q=1 \\ q\in D_\ell}}^Q\mu(A_q)2^{\ell + 1} \\
    &= \kappa \parens*{\sum_{\substack{q=1 \\ q\in D_\ell}}^Q\mu(A_q)}^2 (1 + o(1)) \\
    &= \frac{4\kappa}{\mu(U)^2} \parens*{\sum_{\substack{q=1 \\ q\in D_\ell}}^Q\mu(A_q\cap U)}^2 (1 + o(1))
  \end{align*}
  and by~\th~\ref{divergenceBC}, it follows that
  $\mu(\limsup A_q)\geq \mu(U)^2/(4\kappa)$.

  Suppose there is no such $\ell$.  For each $\ell\geq 0$, denote by
  $\Sigma_\ell$ the sum~(\ref{eq:etasum}) extended over $q\in
  D_\ell$. Then, by assumption, $\Sigma_\ell < \infty$ for each
  $\ell\geq 0$ and $\sum_{\ell\geq 0}\Sigma_\ell = \infty$.  By~\th\ref{prop:erdoschung},
  \begin{align}
    \mu\parens*{\bigcup_{q \in D_\ell} A_q \cap U}
    &\geq \frac{\parens*{\sum_{q \in D_\ell} \mu(A_q\cap U)}^2}{\sum_{q,r \in D_\ell} \mu(A_q\cap A_r\cap U)} \\
    &\geq \frac{\mu(U)^2}{4\kappa}\cdot \frac{\parens*{\sum_{q \in D_\ell} \mu(A_q)}^2}{\parens*{\sum_{q\in D_\ell}\mu(A_q)}^2(1 + o(1)) + 2 \sum_{q\in D_\ell}\mu(A_q)\eta(q)}. \nonumber
  \end{align}
    Now,
    \begin{equation*}
      \sum_{q\in D_\ell}\mu(A_q)\eta(q)
      \leq 2^{2(\ell+1)}h(2^{\ell+1})\Sigma_\ell\ll 2^{2(\ell+1)}h(2^\ell)\Sigma_\ell
    \end{equation*}
    and
    \begin{equation*}
      \parens*{\sum_{q\in D_\ell}\mu(A_q)}^2 \geq 2^{2\ell}h(2^\ell)^2\Sigma_\ell^2.
    \end{equation*}
  By the convergence assumed in~(\ref{eq:hconv}), it follows that
  $h(2^\ell) \Sigma_\ell$ gets arbitrarily large, and this implies that
  \begin{equation*}
    2 \sum_{q\in D_\ell}\mu(A_q)\eta(q) = o\brackets*{\parens*{\sum_{q\in D_\ell}\mu(A_q)}^2},
  \end{equation*}
  hence
    \begin{align*}
    \mu\parens*{\bigcup_{q \in D_\ell} A_q}
    &\geq\frac{\mu(U)^2}{4\kappa} \frac{\parens*{\sum_{q \in D_\ell} \mu(A_q)}^2}{ \parens*{\sum_{q\in D_\ell}\mu(A_q)}^2(1 + o(1))}
    \end{align*}
  for infinitely many $\ell$. Once again, it follows 
  that $\mu(\limsup A_q)\geq \mu(U)^2/(4\kappa)$.
\end{proof}

In the proofs of \th\ref{thm:extradiv,thm:finitetarget}, once it is established that $W(\psi,\gamma)$ has positive measure, it remains to show that its measure is $1$. Once again, this is a task that is easier for homogeneous problems than it is for inhomogeneous problems, because there are zero-one laws in the homogeneous setting~\cite{Casselslemma,Gallagher01,Vilchinski}. For inhomogeneous problems, it is usually necessary to establish positive measure on small scales and use a variant of the Lebesgue density lemma. We use the following result for~\th\ref{thm:extradiv}. 
 
\begin{proposition}[Beresnevich--Dickinson--Velani,~{\cite[Lemma 6]{BDV}}]\th\label{BDVdensitylemma}
  Let $(X,d)$ be a metric space with a finite measure $\mu$ such that
  every open set is $\mu$-measurable. Let $A$ be a Borel subset of $X$
  and let $f:\RR_+\to\RR_+$ be an increasing function with $f(x)\to 0$ as
  $x\to 0$. If for every open set $U\subset X$ we have
  \begin{equation*}
    \mu(A\cap U) \geq f(\mu(U)),
  \end{equation*}
  then $\mu(A) = \mu(X)$.
\end{proposition}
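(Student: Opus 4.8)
The plan is to argue by contradiction and to lean entirely on the outer regularity of finite Borel measures on metric spaces; no covering lemma or density theorem is needed in this open-set formulation. Suppose $\mu(A)<\mu(X)$. Since $A$ is Borel and $\mu$ is finite, $X\setminus A$ is $\mu$-measurable and the number $c:=\mu(X\setminus A)=\mu(X)-\mu(A)$ is strictly positive. Because $f$ is increasing with values in $\RR_+$, it follows that $f(c)>0$.

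Next I would produce the open set that witnesses the contradiction. By outer regularity there is an open set $U\supseteq X\setminus A$ with $\mu(U)<c+f(c)$. Writing $U$ as the disjoint union $(X\setminus A)\cup(A\cap U)$ gives $\mu(A\cap U)=\mu(U)-c<f(c)$. On the other hand, $U\supseteq X\setminus A$ forces $\mu(U)\geq c$, so monotonicity of $f$ yields $f(\mu(U))\geq f(c)$. Feeding the open set $U$ into the hypothesis $\mu(A\cap U)\geq f(\mu(U))$ now produces
\begin{equation*}
  f(c)\leq f(\mu(U))\leq\mu(A\cap U)<f(c),
\end{equation*}
which is absurd. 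Hence $\mu(A)=\mu(X)$.

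I do not expect a genuine obstacle; the argument is short, and the only nonelementary ingredient is the appeal to outer regularity. I would simply cite a standard reference for the fact that a finite Borel measure on a metric space is outer regular, the usual proof being that the Borel sets trappable between a closed set and an open set of nearly the same $\mu$-measure form a $\sigma$-algebra (finiteness of $\mu$ entering at the countable-union step) that contains every closed set. Two small remarks are worth recording. First, the hypothesis $f(x)\to 0$ as $x\to 0$ is never used in this open-set formulation---only the positivity and monotonicity of $f$ matter---so in the applications (where typically $f(x)\asymp x^2$) the decay assumption is harmless surplus. Second, a Lebesgue-density-style argument would be an alternative route, but it would require extra structure on $\mu$, such as a doubling condition or a Vitali covering property, that is not assumed here, which is exactly why the regularity argument is the natural one.
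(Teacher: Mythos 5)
Your argument is correct. Note that the paper does not prove this proposition at all: it is imported verbatim from Beresnevich--Dickinson--Velani \cite[Lemma 6]{BDV}, so there is no in-paper proof to compare against; your contradiction-plus-outer-regularity argument is the standard (and essentially the original) one, and every step checks out: with $c=\mu(X\setminus A)>0$ and an open $U\supseteq X\setminus A$ with $\mu(U)<c+f(c)$, the disjoint decomposition $U=(X\setminus A)\cup(A\cap U)$ gives $\mu(A\cap U)<f(c)\leq f(\mu(U))$, contradicting the hypothesis, and the outer regularity of a finite Borel measure on a metric space is indeed a standard fact provable exactly as you sketch. One point worth making explicit: your deduction $f(c)>0$ silently reads $\RR_+$ as the strictly positive reals (equivalently, $f>0$); this is the intended reading and it is essential, since if $f$ were allowed to vanish on an initial segment the statement would simply be false (e.g.\ $X=[0,1]$ with Lebesgue measure, $A=[0,3/4]$, $f(x)=\max\set{0,x-1/2}$ satisfies the hypothesis). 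Your observation that the decay condition $f(x)\to 0$ as $x\to 0$ is never used in the proof is also accurate --- it only serves to make the hypothesis satisfiable in typical applications --- and in the paper's own use of the lemma one has $f(x)=x^2/8$, so nothing is lost.
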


We use the following result in the proof of~\th\ref{thm:finitetarget}.
\begin{proposition}[Beresnevich--Hauke--Velani,~{\cite[Theorem 5]{beresnevich2024borel}}]\th\label{BHV}
Let $\mu$ be a doubling Borel regular probability measure on a metric space $X$. Let $(A_q)_{q\in\NN}$ be a sequence of $\mu$-measurable subsets of $X$. Suppose that 
\begin{equation}\label{eqn01}
\sum_{q=1}^\infty \mu(A_q)=\infty
\end{equation}
and that there exists a constant $C>0$ such that
\begin{equation}\label{eqn02}
\sum_{q,r=1}^Q  \mu(A_q\cap A_r)\leq C\left(\sum_{q=1}^Q  \mu(A_q)\right)^2\quad\text{for infinitely many $Q\in\NN$\,.}
\end{equation}
 In addition, suppose that for any $\delta>0$ and any closed ball $B$ centered at $\operatorname{supp}\mu$ there exists $q_0 =q_0 (\delta, B) $ such that for all $q  \geq q_0$
\begin{equation}\label{vb89}
\mu\left(B\cap A_q\right)\le (1+\delta)\mu\left(B\right)\mu(A_q)\,.
\end{equation}
Then $\mu(\limsup A_q)=1$.
\end{proposition}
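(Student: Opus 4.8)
The plan is to prove the statement in two phases: first obtain $\mu(\limsup A_q) > 0$ from quasi-independence on average, and then bootstrap positive measure up to full measure using the local non-concentration hypothesis \eqref{vb89} together with the Lebesgue density theorem, which is available because $\mu$ is doubling. For the first phase I would apply the Chung--Erd\H{o}s lemma (\th\ref{prop:erdoschung}), or equivalently the divergence Borel--Cantelli lemma (\th\ref{divergenceBC}), to the tail families $(A_q)_{q\ge N}$. Since \eqref{eqn01} and \eqref{eqn02} are unaffected by deleting finitely many sets, letting $Q\to\infty$ along the subsequence supplied by \eqref{eqn02} gives $\mu\!\left(\bigcup_{q\ge N}A_q\right)\ge 1/C$ for every $N$, the terms with $q<N$ contributing a negligible correction. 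As $\limsup A_q=\bigcap_N\bigcup_{q\ge N}A_q$ is a nested intersection, $\mu(\limsup A_q)\ge 1/C>0$.

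For the second phase, write $G=\limsup A_q$ and suppose for contradiction that $\mu(G)<1$, so $\mu(G^c)>0$. Since $\mu$ is doubling, the Lebesgue density theorem applies, so there is $x_0\in\operatorname{supp}\mu$ with $\mu(G\cap B(x_0,r))/\mu(B(x_0,r))\to 0$ as $r\to 0$. To contradict this I would establish the reverse estimate: there is a constant $c>0$, depending only on $C$ and the doubling constant, such that $\mu(G\cap B)\ge c\,\mu(B)$ for every ball $B$ centered on $\operatorname{supp}\mu$. Once this is known for balls, a Vitali covering argument (legitimate for doubling measures) upgrades it to every open set, and then \th\ref{BDVdensitylemma} with $f(t)=ct$ forces $\mu(G)=1$; but in fact the case $B=B(x_0,r)$ for small $r$ already contradicts the density-point property.

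The density estimate is where the real work lies, and the idea is to re-run the quasi-independence argument \emph{inside} a ball $B$. The overlap sum causes no trouble: $\mu(A_q\cap A_r\cap B)\le\mu(A_q\cap A_r)$, and \eqref{eqn02} bounds $\sum_{q,r\le Q}\mu(A_q\cap A_r)$ by $C(\sum_{q\le Q}\mu(A_q))^2$ along an infinite set of $Q$. The missing ingredient is the matching \emph{lower} bound $\sum_{q\le Q}\mu(A_q\cap B)\gg\mu(B)\sum_{q\le Q}\mu(A_q)$, and this is the role of \eqref{vb89}: although it is only an upper bound on each $\mu(A_q\cap B)$, one covers $\operatorname{supp}\mu$ by boundedly-overlapping balls $B_1,\dots,B_N$ of a common small radius (the doubling property controls $N$ and keeps $\sum_i\mu(B_i)$ absolutely bounded); for large $q$ one has $\sum_i\mu(A_q\cap B_i)\ge\mu(A_q)$ while each term is $\le(1+\delta)\mu(B_i)\mu(A_q)$, so pigeonholing over the $N$ balls and over the subsequence from \eqref{eqn02} isolates one ball $B_{i^*}$ on which the partial sums are comparable to $\mu(B_{i^*})\sum_{q\le Q}\mu(A_q)$. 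The divergence Borel--Cantelli lemma in $(B_{i^*},\mu|_{B_{i^*}})$ then gives $\mu(G\cap B_{i^*})\gg\mu(B_{i^*})$ with an absolute constant; and since the same counting shows that the balls $B_i$ of a fine cover on which $\sum_q\mu(A_q\cap B_i)=\infty$ must carry $\mu$-mass arbitrarily close to $1$, iterating the construction across shrinking scales propagates the estimate to all balls.

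The hard part, as the previous paragraph signals, is turning the one-sided hypothesis \eqref{vb89} into the two-sided control needed to localize quasi-independence on average, and doing so with an implied constant that does not deteriorate as $\mu(B)\to 0$; that is the whole point of the covering/pigeonhole device together with doubling. The remaining difficulties are bookkeeping: aligning the ``infinitely many $Q$'' from \eqref{eqn02} with the pigeonhole step, absorbing the thresholds $q_0(\delta,B_i)$ (a finite, hence harmless, perturbation once $Q$ is large), and keeping the cover scales coherent through the iteration. With the density estimate in hand, the density point $x_0$ produces the contradiction, and therefore $\mu(\limsup A_q)=1$.
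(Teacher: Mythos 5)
First, note that the paper does not prove this proposition at all---it is quoted verbatim from Beresnevich--Hauke--Velani \cite[Theorem~5]{beresnevich2024borel} and used as a black box---so your attempt can only be judged on its own merits, and it has a genuine gap at its crux. Your plan is to contradict a Lebesgue density point of $(\limsup A_q)^c$ by showing $\mu(\limsup A_q\cap B)\geq c\,\mu(B)$ with a scale-free constant $c$ for every ball $B$. But the localized Chung--Erd\H{o}s step cannot deliver that constant from the hypotheses you have: since \eqref{eqn02} only controls the \emph{global} pair correlations and you bound $\mu(A_q\cap A_r\cap B)\leq\mu(A_q\cap A_r)$, the best the lemma gives (even granting the lower bound $\sum_{q\leq Q}\mu(A_q\cap B)\gg\mu(B)\sum_{q\leq Q}\mu(A_q)$ along the subsequence) is $\mu(\limsup A_q\cap B)\gg\mu(B)^2/C$, i.e.\ a constant that degenerates linearly as $\mu(B)\to0$. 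A bound quadratic in $\mu(B)$ is perfectly consistent with $x_0$ being a density point of the complement, so the intended contradiction evaporates; your parenthetical claim that one gets ``$\mu(G\cap B_{i^*})\gg\mu(B_{i^*})$ with an absolute constant'' is exactly the unjustified step. Separately, your route to the lower bound on $\sum_q\mu(A_q\cap B)$ only produces it for \emph{one} ball $B_{i^*}$ selected by pigeonholing from a cover, whereas the argument needs it for the specific shrinking balls $B(x_0,r)$; the closing ``iterate across shrinking scales'' is not an argument---mere divergence of $\sum_q\mu(A_q\cap B_i)$ does not suffice for Chung--Erd\H{o}s (you need the local partial sums comparable to $\mu(B)\sum_{q\leq Q}\mu(A_q)$ at the same $Q$'s where \eqref{eqn02} holds), and covering $\operatorname{supp}\mu$ by finitely many balls of a common small radius already presumes total boundedness, which a doubling probability measure need not have.

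Both defects are repairable, and the repair shows where \eqref{vb89} really enters. To convert the one-sided hypothesis into a lower bound valid for \emph{every} fixed ball $B$, apply \eqref{vb89} not to a cover of $\operatorname{supp}\mu$ but to a finite ball cover of (almost all of) $\operatorname{supp}\mu\setminus B$ with total measure at most $\mu(B^c)+\epsilon$: then $\mu(A_q\setminus B)\leq(1+\delta)(\mu(B^c)+\epsilon)\mu(A_q)$, hence $\mu(A_q\cap B)\geq\bigl(\mu(B)-\delta-(1+\delta)\epsilon\bigr)\mu(A_q)\geq\tfrac12\mu(B)\mu(A_q)$ for all large $q$, with $\delta,\epsilon$ chosen in terms of $\mu(B)$ only. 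Summing over finitely many disjoint balls inside an arbitrary open set $U$ (Vitali for doubling measures) gives $\sum_{q\leq Q}\mu(A_q\cap U)\geq\tfrac14\mu(U)\sum_{q\leq Q}\mu(A_q)-O_U(1)$, and then the localized Chung--Erd\H{o}s bound yields $\mu(\limsup A_q\cap U)\gg\mu(U)^2/C$ for every open $U$. At that point you should abandon the density-point contradiction and instead invoke \th\ref{BDVdensitylemma} with $f(t)\asymp t^2$, which tolerates the quadratic loss and immediately gives $\mu(\limsup A_q)=1$; this is precisely the mechanism the paper itself uses elsewhere (\th\ref{prop:abstractyu} combined with \th\ref{BDVdensitylemma} in the proof of \th\ref{thm:extradiv}). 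Your Phase~1 (positive measure via \th\ref{prop:erdoschung}) is fine but, once the above is in place, it is not even needed separately.
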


\section{Number theoretic statements}

The main purpose of this section is to prove the following statement
relating the weights that will arise in the application
of~\th~\ref{prop:abstractyu} to the weights appearing
in~\th~\ref{thm:extradiv}. We also collect some standard number
theoretic lemmas.

\begin{lemma}\th\label{loglogsum}
  For $d(n)$ the number of divisors of $n$ and
  $f:[1,\infty)\to[1,\infty)$ any increasing function, one has
  \begin{equation*}
    \sum_{n\leq x}\frac{1}{f(\log d(n))d(n)} 
    \gg
    \frac{x}{f((\log 2)\log\log x)\sqrt{\log x} }. 
  \end{equation*}
\end{lemma}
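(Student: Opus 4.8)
The plan is to exploit the well-known fact that the number of divisors $d(n)$ is, on a set of $n$ of full density, of size roughly $(\log n)^{\log 2} = 2^{\log\log n}$, while the "typical" fluctuations around this are controlled by the Erdős–Kac-type central limit theorem for $\omega(n)$ (the number of distinct prime factors). More precisely, $\log d(n)$ behaves like $(\log 2)\,\omega(n)$, which has mean and variance both $\sim \log\log n$. Since $f$ is increasing and $d(n)^{-1}$ is decreasing in $d(n)$, the dominant contribution to the sum should come from those $n\leq x$ for which $\omega(n)$ is close to $\log\log x$, i.e. $d(n) \asymp 2^{\log\log x} = (\log x)^{\log 2}$, and for these $n$ the summand is $\asymp 1/(f((\log 2)\log\log x)\,(\log x)^{\log 2})$. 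Summing over the $\asymp x$ such $n$ would give $\asymp x/(f((\log 2)\log\log x)(\log x)^{\log 2})$, which is \emph{larger} than the claimed bound; so the real content is just to produce the stated lower bound, which is weaker and hence easier.

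Here is how I would carry it out. First, restrict the sum to $x^{1/2} < n \leq x$, so that $\log n \asymp \log x$ and $\log\log n = \log\log x + O(1)$ throughout; this changes nothing up to constants. Second, fix a large constant $K$ and restrict further to the set
\begin{equation*}
  S = \set*{n : x^{1/2} < n \leq x,\ \abs*{\omega(n) - \log\log x} \leq K\sqrt{\log\log x}}.
\end{equation*}
By the Turán–Kubilius inequality (or the Hardy–Ramanujan / Erdős–Kac estimate), $\#S \geq (1 - \epsilon_K)\cdot \tfrac12 x$ for $x$ large, where $\epsilon_K \to 0$ as $K\to\infty$; fix $K$ so that $\#S \geq x/4$, say. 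Third, for $n \in S$ we bound $d(n) \leq 2^{\omega(n)}\cdot(\text{correction})$. The subtlety is that $d(n) = \prod (a_i+1)$ where $n = \prod p_i^{a_i}$, so $d(n)$ can exceed $2^{\omega(n)}$ when $n$ is not squarefree; but the contribution of non-squarefree $n$, or more simply a further Turán–Kubilius-type bound on $\sum_{p^2\mid n}$, shows that for all but $o(x)$ values of $n\leq x$ one has $\log d(n) \leq (\log 2)\omega(n) + C\sqrt{\log\log x}$ for a suitable constant $C$ (the excess $\log d(n) - (\log 2)\omega(n)$ has bounded mean). Discarding those exceptional $n$ keeps $\#S' \geq x/8$ for the surviving set $S'$. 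On $S'$ we then have
\begin{equation*}
  \log d(n) \leq (\log 2)\log\log x + C'\sqrt{\log\log x},
\end{equation*}
so, since $f$ is increasing and $f$ maps into $[1,\infty)$, $f(\log d(n)) \leq f((\log 2)\log\log x + C'\sqrt{\log\log x})$. The only remaining issue is to absorb the $C'\sqrt{\log\log x}$ error inside the argument of $f$, since $f$ is an arbitrary increasing function and we are not allowed any growth hypothesis. This is handled by instead using the crude bound $d(n) \leq 2^{\omega(n) + (\text{number of }i\text{ with }a_i\geq 2,\text{ counted with multiplicity})}$ together with a more careful truncation: replace the event $\{\omega(n)\approx \log\log x\}$ by the event $\{\omega(n) \leq \log\log x\}$ and simultaneously $\{n\text{ squarefree}\}$. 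The squarefree $n\leq x$ with $\omega(n)\leq \log\log x$ still number $\gg x$ (squarefrees are a positive proportion, and $\omega(n) \leq \log\log x$ fails only on $o(x)$ of them by Turán–Kubilius since the mean is $\log\log x$ and... — here one must be slightly careful: $\omega(n) \leq \log\log x$ holds for a proportion tending to $1/2$, not $1$, but $1/2$ of a positive proportion is still a positive proportion). For squarefree $n$ with $\omega(n)\leq \log\log x$ we get the clean bound $d(n) = 2^{\omega(n)} \leq 2^{\log\log x} = (\log x)^{\log 2}$ and $f(\log d(n)) \leq f((\log 2)\log\log x)$, with \emph{no} error term. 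Hence
\begin{equation*}
  \sum_{n\leq x} \frac{1}{f(\log d(n))d(n)}
  \geq \sum_{n\in S''} \frac{1}{f(\log d(n))d(n)}
  \geq \frac{\#S''}{f((\log 2)\log\log x)\,(\log x)^{\log 2}}
  \gg \frac{x}{f((\log 2)\log\log x)\,(\log x)^{\log 2}},
\end{equation*}
and since $\log 2 < 1/2$ we have $(\log x)^{\log 2} \leq (\log x)^{1/2} = \sqrt{\log x}$, which yields the claimed inequality. (In fact this proves something stronger, with $(\log x)^{\log 2}$ in place of $\sqrt{\log x}$.)

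The main obstacle is the one flagged above: because $f$ is completely arbitrary among increasing functions, no additive error inside the argument of $f$ is permissible, so one cannot afford the usual $\sqrt{\log\log x}$ slack from the Erdős–Kac central limit theorem, nor the non-squarefree correction to $\log d(n)$. The resolution is to pass to squarefree integers with $\omega(n) \leq \log\log x$ (a one-sided bound, achieved by a positive proportion of squarefree integers by Turán–Kubilius applied to $\omega(n) - \log\log x$), on which $\log d(n) = (\log 2)\omega(n) \leq (\log 2)\log\log x$ \emph{exactly}, monotonicity of $f$ then applies cleanly, and the positive-density count $\#S'' \gg x$ finishes the estimate. Everything else — the reduction to $n > x^{1/2}$, the Turán–Kubilius counting, the final summation — is routine.
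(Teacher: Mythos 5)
Your argument fails at the last numerical step: with the paper's convention $\log=\ln$, one has $\log 2\approx 0.693>1/2$, not $<1/2$. Hence $(\log x)^{\log 2}\geq\sqrt{\log x}$, and the bound you actually derive,
\begin{equation*}
  \sum_{n\leq x}\frac{1}{f(\log d(n))d(n)}\gg\frac{x}{f((\log 2)\log\log x)\,(\log x)^{\log 2}},
\end{equation*}
is \emph{weaker} than the lemma by a factor $(\log x)^{\log 2-1/2}$; it is not ``something stronger.'' The same sign error infects your opening heuristic: since typical $n$ have $d(n)\approx(\log x)^{\log 2}$ while $\sum_{n\leq x}1/d(n)\asymp x/\sqrt{\log x}$ (Ramanujan--Wilson; Tenenbaum, Theorem II.6.8, as cited in the paper), the sum $\sum 1/d(n)$ is \emph{not} dominated by typical $n$ but by the sparse set of $n$ with abnormally few divisors (roughly $\omega(n)\approx\tfrac12\log\log x$). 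Consequently no argument of the shape ``positive-proportion set times pointwise lower bound for $1/d(n)$'' can reach $x/\sqrt{\log x}$: on any set where all you know is $d(n)\leq(\log x)^{\log 2}$, the pointwise bound costs you the full factor $(\log x)^{\log 2}$, and replacing cardinality by nothing else cannot recover the missing $(\log x)^{\log 2-1/2}$.

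The repair is to keep the genuine weights $1/d(n)$ rather than their infimum on your set. This is what the paper does: by Kac's theorem on the distribution of $\log_2 d(n)$, the set $A(x)=\{n\leq x:\log_2 d(n)\leq\log\log x\}$ has $\#A(x)\sim x/2$, and---crucially---its complement contributes at most $x\,(\log x)^{-\log 2}=o(x/\sqrt{\log x})$ to $\sum_{n\leq x}1/d(n)$, precisely \emph{because} $\log 2>1/2$; hence $\sum_{n\in A(x)}1/d(n)\geq\tfrac13\sum_{n\leq x}1/d(n)\gg x/\sqrt{\log x}$, while on $A(x)$ monotonicity of $f$ gives $f(\log d(n))\leq f((\log 2)\log\log x)$, exactly the clean one-sided truncation you were aiming for. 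Your device of passing to squarefree $n$ with $\omega(n)\leq\log\log x$ correctly avoids any additive slack inside the argument of $f$ and could be salvaged, but only if you then prove a weighted count of the form $\sum_{n\in S''}1/d(n)\gg x/\sqrt{\log x}$ (e.g.\ via the Ramanujan--Wilson asymptotic restricted to squarefree integers), not the cardinality bound $\#S''\gg x$ that your write-up uses.
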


\begin{proof}
  By a result of Kac~\cite{Kacdivisorfundist}, for all $y\in\RR$, we have
  \begin{equation*}
    \lim_{x\to\infty} \frac{1}{x}\#\set*{1\leq n \leq x : \log_2 d(n) \leq \log\log x + y \sqrt{\log\log x}} = \frac{1}{2\pi}\int_{-\infty}^y e^{-w^2/2}\, dw.
  \end{equation*}
  In particular, by setting $y=0$ we see that
  \begin{equation*}
    \#\underbrace{\set*{1\leq n \leq x : \log_2 d(n) \leq \log\log x}}_{A(x)} \sim \frac{x}{2}\quad (x\to\infty),
  \end{equation*}
  and it follows that for all $x$ large enough we have
  \begin{equation*}
    \sum_{n \in A(x)} \frac{1}{d(n)} \geq \frac{1}{3} \sum_{n \leq x} \frac{1}{d(n)}. 
  \end{equation*}
  Observe then that
  \begin{align*}
    \sum_{n\leq x}\frac{1}{f(\log d(n))d(n)}
    &\quad\geq \sum_{n \in A(x)}\frac{1}{f(\log d(n))d(n)} \\
    &\quad\geq \frac{1}{f((\log 2)\log\log x)} \sum_{n\in A(x)}\frac{1}{d(n)} \\
        &\quad\geq \frac{1}{3}\frac{1}{f((\log 2)\log\log x)} \sum_{n\leq x}\frac{1}{d(n)}
  \end{align*}
  It is known (see for example~\cite[Theorem~II.6.8]{Tenenbaum}) that
  \begin{equation*}
    \sum_{n\leq x}\frac{1}{d(n)} \asymp \frac{x}{\sqrt{\log x}},
  \end{equation*}
  so we have 
  \begin{equation*}
    \sum_{n\leq x}\frac{1}{f(\log d(n))d(n)}
    \gg \frac{x}{f((\log 2)\log\log x) \sqrt{\log x}}
  \end{equation*}
  which finishes the proof.
\end{proof}

The following lemma consists of well-known facts about common arithmetic functions. 
\begin{lemma}[\cite{hardywright}]\th\label{lem:fromHW}
  For $n\in\NN$, let $\varphi(n)$ be the Euler totient, and
  \begin{equation*}
    \sigma_\tau(n):=\sum_{d\mid n} d^\tau.
  \end{equation*}
  Then the following asymptotic statements hold:
  \begin{align*}
    \sum_{n=1}^N\frac{\varphi(n)}{n} &\asymp \sum_{n=1}^N 1 \\
    \sum_{n=1}^N\sigma_1(n) &\asymp \sum_{n=1}^N n\\
    \sum_{n=1}^N\sigma_\tau(n) &\ll \sum_{n=1}^N 1
  \end{align*}
  for all $\tau < 0$. Also $\sigma_0(n) \ll n^{\eps}$ for every
  $\eps>0$.
\end{lemma}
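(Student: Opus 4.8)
The plan is to derive all four estimates from the classical rearrangement of a divisor sum,
\begin{equation*}
  \sum_{n\leq N}\sum_{d\mid n} g(d) \;=\; \sum_{d\leq N} g(d)\left\lfloor\frac{N}{d}\right\rfloor ,
\end{equation*}
valid for any arithmetic function $g$, and to finish each one off with an elementary estimate; only the divisor bound $\sigma_0(n)\ll n^\eps$ needs a separate argument, based on multiplicativity. Since $\sum_{n\leq N}1 = N$ and $\sum_{n\leq N}n\asymp N^2$, it suffices to estimate the three divisor sums against $N$ and $N^2$.

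For the totient sum I would use the identity $\varphi(n)/n = \sum_{d\mid n}\mu(d)/d$, which after the rearrangement above gives
\begin{equation*}
  \sum_{n\leq N}\frac{\varphi(n)}{n} \;=\; \sum_{d\leq N}\frac{\mu(d)}{d}\left\lfloor\frac{N}{d}\right\rfloor \;=\; N\sum_{d\leq N}\frac{\mu(d)}{d^2} + O\!\left(\sum_{d\leq N}\frac{1}{d}\right) \;=\; \frac{6}{\pi^2}\,N + O(\log N),
\end{equation*}
using $\sum_{d=1}^\infty\mu(d)/d^2 = 6/\pi^2$; this is $\asymp N$. (The upper bound by itself is immediate from $\varphi(n)\leq n$.) For $\sigma_1$, the rearrangement yields $\sum_{n\leq N}\sigma_1(n) = \sum_{d\leq N} d\lfloor N/d\rfloor$, and the elementary two-sided bound $N/d - 1 \leq \lfloor N/d\rfloor \leq N/d$ sandwiches this sum between $\tfrac{1}{2} N^2 + O(N)$ and $N^2$, hence $\asymp N^2 \asymp \sum_{n\leq N} n$.

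For $\sigma_\tau$ with $\tau<0$ the same step gives
\begin{equation*}
  \sum_{n\leq N}\sigma_\tau(n) \;=\; \sum_{d\leq N} d^\tau\left\lfloor\frac{N}{d}\right\rfloor \;\leq\; N\sum_{d\leq N} d^{\tau-1} \;\leq\; N\,\zeta(1-\tau),
\end{equation*}
and the only point is that $\tau<0$ forces $1-\tau>1$, so the series $\sum_d d^{\tau-1} = \zeta(1-\tau)$ converges; thus $\sum_{n\leq N}\sigma_\tau(n)\ll N$. Finally, for the divisor bound I would fix $\eps>0$, write $n = \prod_i p_i^{a_i}$ so that $\sigma_0(n)/n^\eps = \prod_i \frac{a_i+1}{p_i^{\eps a_i}}$, and split the product according to whether $p_i \geq 2^{1/\eps}$. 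For such primes $p_i^{\eps a_i}\geq 2^{a_i}\geq a_i + 1$, so those factors are at most $1$; the remaining primes lie in the finite set $\{p : p < 2^{1/\eps}\}$, and for each of them $\sup_{a\geq 0}(a+1)/p^{\eps a}$ is a finite constant (linear numerator, exponential denominator), so the product over them is bounded by a constant $C_\eps$ independent of $n$. Hence $\sigma_0(n)\leq C_\eps\, n^\eps$.

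I do not expect any genuine obstacle: the lemma collects standard facts, recorded for ease of reference, and all of the above can be found in the cited source. If one had to single out a step, it would be the divisor bound, which is the only one not following directly from the divisor-sum rearrangement and which relies instead on the multiplicative structure of $\sigma_0$.
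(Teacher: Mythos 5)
Your proposal is correct, and the paper offers no proof of its own here---the lemma is simply cited to Hardy--Wright, and your arguments (the divisor-sum rearrangement $\sum_{n\leq N}\sum_{d\mid n}g(d)=\sum_{d\leq N}g(d)\lfloor N/d\rfloor$ together with the standard multiplicativity argument for $\sigma_0(n)\ll n^{\eps}$) are exactly the standard ones found in that source. Nothing further is needed.
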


Finally, we make frequent use of the following standard fact.

\begin{lemma}\th\label{lem:avgorder}
  Suppose $f:\NN\to \RR_{\geq 0}$ is a decreasing function and
  $g, h:\NN\to\NN$ are such that
  $\sum_{n=1}^N g(n) \asymp \sum_{n=1}^N h(n)$. Then
  $\sum_{n=1}^N f(n)g(n) \asymp \sum_{n=1}^N f(n)h(n)$.
\end{lemma}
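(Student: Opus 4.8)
The plan is to pass from the weighted sums to the plain partial sums $G(N):=\sum_{n\leq N}g(n)$ and $H(N):=\sum_{n\leq N}h(n)$ by Abel summation, and then invoke the hypothesis $G(N)\asymp H(N)$ directly. Fix the constants $0<c\leq C$ furnished by that hypothesis, so that $c\,H(m)\leq G(m)\leq C\,H(m)$ for every $m\geq 1$.

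First I would write down the summation-by-parts identity: with the convention $G(0)=0$, for every $N\geq 1$,
\[
  \sum_{n=1}^N f(n)g(n) = f(N)\,G(N) + \sum_{n=1}^{N-1}\parens*{f(n)-f(n+1)}\,G(n),
\]
and the analogous identity with $h,H$ in place of $g,G$. Since $f$ is decreasing and takes values in $\RR_{\geq 0}$, every coefficient on the right-hand side is nonnegative: $f(N)\geq 0$ and $f(n)-f(n+1)\geq 0$ for $1\leq n\leq N-1$. Therefore the comparison $c\,H(m)\leq G(m)\leq C\,H(m)$ can be substituted term by term into the identity, yielding
\[
  c\sum_{n=1}^N f(n)h(n)\ \leq\ \sum_{n=1}^N f(n)g(n)\ \leq\ C\sum_{n=1}^N f(n)h(n),
\]
which is exactly $\sum_{n\leq N}f(n)g(n)\asymp\sum_{n\leq N}f(n)h(n)$.

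There is no serious obstacle; the proof is a routine application of partial summation. The only points requiring a moment's care are that the implied constants in $G(N)\asymp H(N)$ are uniform in $N$, so that the term-by-term substitution is legitimate, and that the hypotheses ``$f$ decreasing'' and ``$f\geq 0$'' are precisely what make all the weights $f(N)$ and $f(n)-f(n+1)$ in the Abel expansion nonnegative, so that comparing $G$ with $H$ coefficientwise preserves the inequalities in the stated direction.
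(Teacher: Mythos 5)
Your proof is correct and is essentially identical to the paper's: both apply summation by parts to reduce the weighted sums to the partial sums $\sum_{k\leq n} g(k)$ and $\sum_{k\leq n} h(k)$, use that $f(N)\geq 0$ and $f(n)-f(n+1)\geq 0$ to compare coefficientwise, and then undo the summation by parts. No gaps.
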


\begin{proof}
By partial summation,
    \begin{align*}
      \sum_{n=1}^Nf(n)g(n)
      &=\sum_{n=1}^{N-1}\sum_{k=1}^n g(k)(f(n)-f(n+1))+f(N)\sum_{k=1}^N g(k)\\
      &\asymp \sum_{n=1}^{N-1}\sum_{k=1}^n h(k)(f(n)-f(n+1))+f(N)\sum_{k=1}^N h(k),
    \end{align*}
    since $f(n)-f(n+1)\geq 0$ for all $n$. Reversing the summation by
    parts gives $\sum_{n=1}^Nf(n)h(n)$, proving the lemma.
\end{proof}

\section{Fast divergence and extra divergence}

In this section we present the proof of~\th~\ref{thm:extradiv}.  But
first, we establish the following proposition, mentioned in the
introduction, showing that~\th~\ref{conj:movingKT} holds for all
$\psi$ such that $\sum_{q\leq Q}\psi(q)$ grows fast enough. It is a
direct consequence of an asymptotic formula appearing
in~\cite{Sprindzuk}.

\begin{proposition}\th\label{prop:sprindzuk}
  \th\ref{conj:movingKT} holds for $\psi:\NN\to\RR_{\geq 0}$ if there
  exists $\delta>0$ such that
  \begin{equation}
    \label{eq:sprindzukprofits}
    \sum_{q\leq Q} \psi(q) \gg (\log Q)(\log\log Q)^{3+\delta}
  \end{equation}
  holds for infinitely many $Q>1$.
\end{proposition}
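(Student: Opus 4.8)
The plan is to deduce the proposition directly from the asymptotic counting formula of Sprind\v{z}uk, with only a short computation on top. Write $\Psi(Q)=\sum_{q\le Q}\psi(q)$, and for $\alpha\in[0,1]$ set
\[
  N(Q,\alpha)=\#\set{q\le Q:\norm{q\alpha-\gamma_q}<\psi(q)},
\]
so that $\alpha\in W(\psi,\gamma)$ exactly when $N(Q,\alpha)\to\infty$ as $Q\to\infty$. We may assume $\psi(q)<1/2$ for all $q$, since otherwise (as $\psi$ is decreasing) $\psi\ge 1/2$ identically and $W(\psi,\gamma)=[0,1]$. Sprind\v{z}uk's formula~\cite{Sprindzuk}, specialised to a single linear form with the moving shift $\gamma_q$, then asserts that for every $\eps>0$ there is a Lebesgue-null set outside of which
\[
  N(Q,\alpha)=2\Psi(Q)+O_{\alpha,\eps}\!\left(\bigl(\Psi(Q)\log Q\bigr)^{1/2}(\log\log Q)^{3/2+\eps}\right)
\]
for all $Q$. (At bottom this rests on a second-moment estimate of the form $\sum_{q,r\le Q}\meas(A_q\cap A_r)\ll\Psi(Q)^2+\Psi(Q)\log Q$, where $A_q=\set{\alpha:\norm{q\alpha-\gamma_q}<\psi(q)}$; alternatively one could establish the proposition from such an estimate directly, feeding it into a Gál--Koksma-type quantitative Borel--Cantelli lemma rather than quoting the formula.)

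Granting this, suppose $\delta>0$ and $\Psi(Q)\gg(\log Q)(\log\log Q)^{3+\delta}$ for infinitely many $Q$; call these values $Q_1<Q_2<\cdots$ and note that $\Psi(Q_n)\to\infty$. Applying the formula with any fixed $\eps\in(0,\delta/2)$, one gets along the subsequence
\[
  \frac{\bigl(\Psi(Q_n)\log Q_n\bigr)^{1/2}(\log\log Q_n)^{3/2+\eps}}{\Psi(Q_n)}
  =\left(\frac{\log Q_n}{\Psi(Q_n)}\right)^{1/2}(\log\log Q_n)^{3/2+\eps}
  \ll(\log\log Q_n)^{\eps-\delta/2}\longrightarrow 0,
\]
so $N(Q_n,\alpha)=(1+o(1))\,2\Psi(Q_n)\to\infty$ for every $\alpha$ off the null set. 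Since $N(\,\cdot\,,\alpha)$ is nondecreasing in $Q$, it follows that $N(Q,\alpha)\to\infty$ as $Q\to\infty$ for almost every $\alpha$; equivalently $\norm{q\alpha-\gamma_q}<\psi(q)$ for infinitely many $q$, for almost every $\alpha$. That is, $\meas(W(\psi,\gamma))=1$, which is the conclusion of \th\ref{conj:movingKT}. (Note the hypothesis being stated only for infinitely many $Q$ is harmless: one works along that subsequence and then uses monotonicity of $N$ in $Q$.)

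The only genuine work here is transcribing Sprind\v{z}uk's formula in the stated generality and shape: it is recorded for general systems of linear forms, so one must specialise to one variable, confirm that the $q$-dependent inhomogeneous shift is permitted, and check that the error term carries exactly the indicated dependence on $\log Q$ and $\log\log Q$. This is exactly where the $(\log Q)(\log\log Q)^{3+\delta}$ threshold comes from — it is the precise point at which a second-moment bound of size $\Psi(Q)^2+\Psi(Q)\log Q$, combined with the logarithmic loss inherent in quantitative divergence Borel--Cantelli, renders the error in the formula negligible; a larger power of $\log Q$ in the second moment would correspondingly weaken the threshold, which is why this route cannot reach functions as small as $1/q$.
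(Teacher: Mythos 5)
Your route is the paper's own: quote Sprind\v{z}uk's almost-everywhere asymptotic formula for $N(Q,\alpha)$, check that under the hypothesis the error is $o(\Psi(Q))$ along the subsequence where $\Psi(Q)\gg(\log Q)(\log\log Q)^{3+\delta}$, and conclude via monotonicity of $N(\cdot,\alpha)$. The one real issue is your transcription of the formula. Sprind\v{z}uk's error term is $O((\Phi(Q)\log Q)^{1/2}[\log(\Phi(Q)\log Q)]^{3/2+\eps})$ with $\Phi(Q)=2\Psi(Q)$, not $(\Psi(Q)\log Q)^{1/2}(\log\log Q)^{3/2+\eps}$: the logarithm is taken of $\Phi(Q)\log Q$, and $\Phi(Q)$ may be far larger than any power of $\log Q$ (e.g.\ $\psi(q)=1/\log q$ gives $\Phi(Q)\asymp Q/\log Q$), in which case $\log(\Phi(Q)\log Q)$ has order $\log Q$ rather than $\log\log Q$. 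Consequently your displayed estimate $(\log Q_n/\Psi(Q_n))^{1/2}(\log\log Q_n)^{3/2+\eps}\ll(\log\log Q_n)^{\eps-\delta/2}$ only covers the regime in which $\Psi(Q)$ is polylogarithmic; as written, the deduction rests on a stronger form of the cited theorem than is actually available. The repair is exactly the case split the paper performs with $M(Q)=\Phi(Q)/\log Q$: when $M(Q)\geq\log Q$ the prefactor $(\log Q/\Phi(Q))^{1/2}\leq M(Q)^{-1/2}$ more than absorbs the $(\log M(Q))^{3/2+\eps}$ loss, and when $M(Q)<\log Q$ one has $\log(\Phi(Q)\log Q)\ll\log\log Q$ and your computation applies verbatim. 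With that extra case your argument coincides with the paper's proof; the remaining steps (working along the subsequence, monotonicity of $N$ in $Q$, passing from $N(Q,\alpha)\to\infty$ a.e.\ to $\meas(W(\psi,\gamma))=1$) are fine, and your parenthetical second-moment/G\'al--Koksma remark is not needed for the proof.
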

  
\begin{proof}
  Denote $\Phi(Q) = \sum_{q\leq Q}2\psi(q)$. For $(\gamma_q)\in \RR$,
  $\psi:\NN\to\RR_{\geq 0}$ a decreasing function, $\alpha\in\RR$, and
  $Q\in\NN$, let
  \begin{equation*}
    N(Q,\alpha) = \set*{1 \leq q \leq Q : \norm{q\alpha - \gamma_q} < \psi(q)}.
  \end{equation*}
  Let $0 < \eps < \delta$. Sprind{\v z}uk~\cite[Page~51,
  Theorem~18]{Sprindzuk} gives the almost everywhere asymptotic
  formula
  \begin{equation}\label{eq:1}
    N(Q,\alpha) = \Phi(Q) + O\parens*{(\Phi(Q)\log Q)^{1/2}\brackets*{\log\parens*{\Phi(Q)\log Q}}^{3/2+\eps}}.
  \end{equation}
  Let $0 < \eps < \delta/2$. By~(\ref{eq:1}) there is some $C>0$ such
  that
  \begin{equation}\label{eq:3}
    N(Q,\alpha) \geq \Phi(Q) - C(\Phi(Q)\log Q)^{1/2}\brackets*{\log\parens*{\Phi(Q)\log Q}}^{3/2+\eps}
  \end{equation}
  for all large $Q>1$. Let
  \begin{equation*}
    M(Q) = \frac{\Phi(Q)}{\log Q},
  \end{equation*}
  so that $M(Q) >(\log\log Q)^{3+\delta}$ for infinitely many
  $Q$, by~(\ref{eq:sprindzukprofits}). Then~(\ref{eq:3}) becomes
  \begin{equation}\label{eq:4}
    N(Q,\alpha) \geq M(Q) \log(Q) - C M(Q)^{1/2}\log(Q)\brackets*{\log\parens*{M(Q)(\log(Q))^2}}^{3/2+\eps}.
  \end{equation}
  Note that
  \begin{align}
    C \brackets*{\log\parens*{M(Q)(\log(Q))^2}}^{3/2+\eps}
    &= C \brackets*{\log M(Q) + 2\log\log Q}^{3/2+\eps} \nonumber \\
    &\leq C \brackets*{\log M(Q) + 2\log\log Q}^{3/2+\eps}.\label{eq:5}
  \end{align}
  If, incidentally, $M(Q)\geq \log Q$ infinitely often, then for
  infinitely many $Q$, the bound becomes
  \begin{equation*}
    \leq C (3\log M(Q))^{3/2 + \eps} \leq M(Q)^{1/4}. 
  \end{equation*}
  Returning this to~(\ref{eq:4}) gives
  \begin{align*}
    N(Q,\alpha) &\geq M(Q) \log(Q) - M(Q)^{3/4}\log(Q)
  \end{align*}
  for all such $Q$, hence $N(Q,\alpha)\to\infty$.

  Suppose then that $M(Q) < \log Q$ for all large
  $Q$. Then~(\ref{eq:5}) becomes
  \begin{align*}
    C \brackets*{\log M(Q) + 2\log\log Q}^{3/2+\eps}
    &\leq C \brackets*{3\log\log Q}^{3/2+\eps} \\
    &\leq C' M(Q)^{1/2}(\log\log Q)^{\eps - \delta/2}
  \end{align*}
  for infinitely many $Q>1$, for some $C'>0$. Putting these into~(\ref{eq:4}) gives
  \begin{align*}
    N(Q,\alpha) &\geq M(Q)\log Q - C'M(Q)\log Q (\log\log Q)^{\eps - \delta/2}\\
                &\to \infty.
  \end{align*}
  Again, the result follows.
\end{proof}

As discussed in the introduction, the rate of growth required
in~\th\ref{prop:sprindzuk} is too fast to apply to even the most
natural functions, namely, those that are $O(1/q)$.

\subsection{Proof of~\th~\ref{thm:extradiv}}

We now turn our attention to proving~\th~\ref{thm:extradiv}. We will
need the following lemma. It is standard, but we include a brief
proof.

\begin{lemma}[Overlap estimates]\th\label{overlaps}
  Let $\gamma:=(\gamma_q)$ and $\psi:\NN\to [0,1/2]$. Define for
  each $q\in\NN$ the set
  \begin{equation*}
    A_q:= A_q(\psi,\gamma) = \set*{\alpha \in[0,1] : \norm{q\alpha - \gamma_q} < \psi(q)}.
  \end{equation*}
  Then for all $1 \leq r < q$, we have
  \begin{equation*}
    \meas(A_q\cap A_r)
    \leq 2 \meas(A_q)\meas(A_r)  + \frac{\gcd(q,r)}{q}\meas(A_q).
  \end{equation*}
\end{lemma}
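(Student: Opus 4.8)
The plan is to realize $[0,1]$ as the circle $\RR/\ZZ$ (the boundary is a null set) and to exploit the arc decompositions of $A_q$ and $A_r$. Since $\psi(q)\leq 1/2$, the set $A_q$ is the full preimage of the arc $\set{x\in\RR/\ZZ : \norm{x-\gamma_q}<\psi(q)}$, of length $2\psi(q)$, under the $q$-to-$1$ map $\alpha\mapsto q\alpha$; hence $\meas(A_q)=2\psi(q)$ and $A_q$ is a disjoint union of $q$ arcs of length $2\psi(q)/q$, one around each solution of $q\alpha\equiv\gamma_q\pmod 1$. Likewise $A_r=\bigsqcup_{i=0}^{r-1}J_i$, where $J_i$ is the arc of length $2\psi(r)/r$ around $(\gamma_r+i)/r$; the $J_i$ are disjoint because $2\psi(r)/r\leq 1/r$.

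The main step is to slice $A_q\cap A_r$ along the arcs $J_i$ of $A_r$ — using $A_r$ rather than $A_q$ precisely because it has fewer arcs, which is what puts $q$ (not $r$) in the denominator of the error term. Each $J_i$ lifts to a genuine interval of $\RR$, and pushing it forward by $\alpha\mapsto q\alpha$ yields an interval $T_i\subset\RR$ of length $L:=2q\psi(r)/r$ with center $q\gamma_r^{\ast}/r+qi/r$, where $\gamma_r^{\ast}$ is a fixed real lift of $\gamma_r$; moreover $\meas(A_q\cap J_i)=\tfrac1q\meas(T_i\cap S_q)$, where $S_q\subset\RR$ is the $1$-periodic lift of the target arc of $A_q$. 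Summing over $i$ and folding back to $\RR/\ZZ$,
\[
  \meas(A_q\cap A_r)=\frac1q\int_{\RR/\ZZ}F(t)\,\bone_{S_q}(t)\,dt,\qquad F(t):=\sum_{i=0}^{r-1}\#\set{n\in\ZZ : t+n\in T_i}.
\]
Writing $d=\gcd(q,r)$, $q=dq'$, $r=dr'$, one checks that as $(i,n)$ runs over $\set{0,\dots,r-1}\times\ZZ$ the quantity $n-qi/r=n-q'i/r'$ takes each value of $\tfrac1{r'}\ZZ$ exactly $d$ times (this is where $\gcd(q',r')=1$ enters). Consequently $F(t)$ equals $d$ times the number of points of $\tfrac1{r'}\ZZ$ lying in an interval of length $L$, so $F(t)\leq d(Lr'+1)=2q\psi(r)+d$ for every $t$. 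Since $\int_{\RR/\ZZ}\bone_{S_q}=2\psi(q)$, this yields
\[
  \meas(A_q\cap A_r)\leq\frac{(2q\psi(r)+d)\,2\psi(q)}{q}=\meas(A_q)\meas(A_r)+\frac{\gcd(q,r)}{q}\meas(A_q),
\]
which is actually a little stronger than the stated bound; the factor $2$ on the main term is slack.

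The only genuine obstacle is the combinatorial bookkeeping asserted above: that the centers $q\gamma_r^{\ast}/r+qi/r$ of the intervals $T_i$, reduced mod $1$, form exactly $\gcd(q,r)$ interleaved copies of the progression $\tfrac1{r'}\ZZ$. This coprimality input is precisely what replaces the naive count $r$ by $\gcd(q,r)$ in the error term, and it is the whole point of the lemma. One must also be slightly careful with floors versus the crude bound $\#\bigl(\tfrac1{r'}\ZZ\cap[\,\text{interval of length }L\,]\bigr)\leq Lr'+1$, but the factor-$2$ room in the statement absorbs any loss. Everything else — the value of $\meas(A_q)$, the arc decomposition, the affine change of variables $\alpha\mapsto q\alpha$, and the trivial degenerate case $r=1$ — is routine.
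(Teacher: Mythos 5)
Your argument is correct, and the bound you reach, $\meas(A_q\cap A_r)\le \meas(A_q)\meas(A_r)+\frac{\gcd(q,r)}{q}\meas(A_q)$, is in fact a touch sharper than the stated one. The arithmetic heart is the same as in the paper's proof: there, $\meas(A_q\cap A_r)$ is bounded by the number of component intervals times the maximal component length $\min\{2\psi(q)/q,\,2\psi(r)/r\}$, and the components are counted via integer pairs $(a,b)$ for which $(a+\gamma_q)/q$ and $(b+\gamma_r)/r$ nearly coincide, using that the integers $ar-bq$ run through multiples of $\gcd(q,r)$, each attained by exactly $\gcd(q,r)$ pairs --- precisely the fact you invoke when you observe that the centers $qi/r-n$ cover $\tfrac{1}{r'}\ZZ$ with multiplicity $d$ (and your sketch of why, via $\gcd(q',r')=1$, is sound). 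Where you genuinely differ is the measure bookkeeping: instead of (count)\,$\times$\,(max length), you push forward by $\alpha\mapsto q\alpha$, slice along the $r$ arcs of $A_r$, and integrate the covering-multiplicity function $F\le 2q\psi(r)+d$ against $\bone_{S_q}$, which computes the intersection measure exactly rather than charging each component its full possible length; that is exactly where the factor $2$ on the main term disappears. Both routes put $q$ rather than $r$ in the denominator of the error term for the same structural reason --- the paper through the choice of the minimum $\delta\le 2\psi(q)/q$, you through slicing along the coarser set $A_r$ --- so the two proofs are the same count at bottom, with yours packaged as an exact integral identity and yielding a marginally better constant. No gaps; the small points you flag (endpoint/boundary null sets, the lattice-point count $\le Lr'+1$) are handled with room to spare.
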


\begin{proof}
  Let
  \begin{equation*}
    \delta = \min\set*{\frac{2\psi(q)}{q},\frac{2\psi(r)}{r}} \qquad\textrm{and}\qquad \Delta = \max\set*{\frac{2\psi(q)}{q},\frac{2\psi(r)}{r}}.
  \end{equation*}
  Note that $A_q\cap A_r$ is a union of intervals having length at
  most $\delta$, and that there are as many of these intervals as
  there are integer solutions $(a,b)$ to
  \begin{equation*}
    \abs*{\frac{a+\gamma_q}{q} - \frac{b + \gamma_r}{r}} < \frac{\psi(q)}{q}+\frac{\psi(r)}{r}\qquad 1\leq a\leq q\qquad 1\leq b\leq r.
  \end{equation*}
  In particular, we have
  \begin{equation}\label{eq:deltaN}
    \meas(A_q\cap A_r) \leq \delta N(q,r),
  \end{equation}
  where
  \begin{equation*}
    N(q,r)= \#\set{(a,b)\in \NN^2 : \abs{r(a+\gamma_q) - q(b+\gamma_r)} < \Delta qr,\, 1\leq a\leq q,\, 1\leq b\leq r}.
  \end{equation*}
  Since integers of the form $ar-bq$ differ by multiples of
  $\gcd(q,r)$, there are at most $(2\Delta qr/\gcd(q,r)) + 1$ such
  integers lying in the interval
  $((q\gamma_r - r\gamma_q)-\Delta qr, (q\gamma_r - r\gamma_q)+\Delta
  qr)$. And for each such integer, there are $\gcd(q,r)$ choices of
  $(a,b)$ that will achieve it. Therefore,
  \begin{equation}\label{eq:toberecalled}
    N(q,r) \leq \parens*{\frac{2\Delta qr}{\gcd(q,r)}+1}\gcd(q,r). 
  \end{equation}
  Putting this into~(\ref{eq:deltaN}) and observing that
  \begin{equation*}
    2\delta\Delta qr = 2\meas(A_q)\meas(A_r)
  \end{equation*}
  proves the lemma.  
\end{proof}

The following lemma is convenient for the application
of~\th\ref{BDVdensitylemma}.

\begin{lemma}\th\label{uniformity}
  Let $\gamma:=(\gamma_q)$ and $\psi:\NN\to [0,1/2]$. Define for each
  $q\in\NN$ the set $A_q$ as in~\th~\ref{overlaps}. Then for every
  nonempty open set $U\subset [0,1]$, there exists $q_0:=q_0(U)>0$
  such that
  \begin{equation*}
    \meas(A_q\cap U) \geq \frac{1}{2}\meas(A_q)\meas(U)
  \end{equation*}
  for all $q\geq q_0$.
\end{lemma}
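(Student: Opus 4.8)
The plan is to exploit the fact that $A_q$ equidistributes as $q\to\infty$, because it is the intersection with $[0,1]$ of the $\frac1q$-periodic set $\tilde A_q:=\set{x\in\RR:\norm{qx-\gamma_q}<\psi(q)}=\bigsqcup_{n\in\ZZ}\tilde J_n$, where $\tilde J_n$ is the open interval of length $\frac{2\psi(q)}{q}$ centered at $\frac{\gamma_q+n}{q}$ (disjointness of the $\tilde J_n$ uses $\psi(q)\leq 1/2$). Since $[0,1]$ is a union of exactly $q$ periods of $\tilde A_q$, one gets $\meas(A_q)=2\psi(q)$ on the nose. If $\psi(q)=0$ the asserted inequality is trivial since both sides vanish, so I would assume $\psi(q)>0$ throughout.

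First I would record the estimate for a single open interval $I=(c,d)\subseteq[0,1]$ of length $\rho$: the intervals $\tilde J_n$ contained in $I$ are pairwise disjoint subsets of $A_q\cap I$, and counting the $n\in\ZZ$ with $\tilde J_n\subseteq I$ — equivalently $cq+\psi(q)-\gamma_q\leq n\leq dq-\psi(q)-\gamma_q$ — there are at least $\rho q-2\psi(q)-1\geq\rho q-2$ of them, so
\begin{equation*}
  \meas(A_q\cap I)\ \geq\ \max\set{0,\,\rho q-2}\cdot\frac{2\psi(q)}{q}\ =\ \meas(A_q)\cdot\max\set*{0,\,\rho-\tfrac2q}.
\end{equation*}
Then I would write the given open set as a countable disjoint union of open intervals $U=\bigsqcup_{j\geq1}I_j$ and sum the previous bound over $j$ to get $\meas(A_q\cap U)\geq\meas(A_q)\,S_q$, where $S_q:=\sum_{j\geq1}\max\set{0,\abs{I_j}-2/q}$. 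The last step is to check that $S_q\to\meas(U)$ as $q\to\infty$: the bound $S_q\leq\sum_j\abs{I_j}=\meas(U)$ is immediate, and for the reverse direction one fixes $\eps>0$, truncates to a finite subfamily $I_1,\dots,I_N$ carrying all but $\eps/2$ of $\meas(U)$, and takes $q$ large enough that $2/q<\min_{j\leq N}\abs{I_j}$ and $2N/q<\eps/2$, whence $S_q\geq\sum_{j\leq N}(\abs{I_j}-2/q)>\meas(U)-\eps$. Applying this with $\eps=\meas(U)/2$ produces the threshold $q_0=q_0(U)$ beyond which $S_q>\tfrac12\meas(U)$, which is exactly the claim.

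I do not expect a serious obstacle; the only point that needs care is that $U$ may have infinitely many connected components, some arbitrarily short relative to the period $1/q$, so the single-interval bound is useless for those — which is why the argument must pass to a finite subfamily before letting $q$ grow. (Equivalently, one could argue that the normalized restrictions $\meas(\,\cdot\,\cap A_q)/\meas(A_q)$ converge weakly to Lebesgue measure on $[0,1]$ and invoke the portmanteau theorem for open sets; the elementary count above is a self-contained substitute for that.)
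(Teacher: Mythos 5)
Your proof is correct, and it follows essentially the same route as the paper: a lattice-point count of the $\frac1q$-spaced intervals of $A_q$ lying inside a single interval $I$, giving $\meas(A_q\cap I)\geq\meas(A_q)(\meas(I)+o(1))$, followed by a reduction of the open set $U$ to finitely many of its component intervals carrying most of $\meas(U)$ before letting $q\to\infty$. The only differences are cosmetic (the paper fixes a finite disjoint family covering $\tfrac23\meas(U)$ at the outset, whereas you truncate the full countable decomposition), so no further comment is needed.
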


\begin{proof}
  Suppose $I\subset[0,1]$ is a nonempty open interval. Then
  \begin{align}
    \meas(A_q\cap I)
    &\geq \brackets*{\#\parens*{\frac{1}{q}\ZZ\cap I}-2}\frac{2\psi(q)}{q} \nonumber\\
    &= \brackets*{q\meas(I)(1+o(1)) -2}\frac{2\psi(q)}{q} \nonumber\\
    &= q\meas(I)(1+o(1))\frac{2\psi(q)}{q} \nonumber\\
    &= \meas(A_q)\meas(I) (1+o(1)). \label{eq:AI}
  \end{align}
  Now, let $I_1, \dots, I_k\subset U$ be disjoint open intervals such that
  \begin{equation}\label{eq:23}
    \meas(I_1\cup\dots\cup I_k) = \sum_{j=1}^k\meas{I_j} \geq \frac{2}{3}\meas(U).
  \end{equation}
  Then
  \begin{align*}
    \meas(A_q\cap U)
    &\geq \sum_{j=1}^k \meas(A_q\cap I_j)\\
    &\overset{(\ref{eq:AI})}{=} (1+o(1))\sum_{j=1}^k \meas(A_q)\meas(I_j)\\
    &\overset{(\ref{eq:23})}{=} (1+o(1))\frac{2}{3}\meas(A_q)\meas(U) \\
    &\geq \frac{1}{2}\meas(A_q)\meas(U)
  \end{align*}
  for $q$ sufficiently large. 
\end{proof}

We are now prepared to state the proof of the first main theorem. 

\begin{proof}[Proof of~\th~\ref{thm:extradiv}]
  Let $\gamma:=(\gamma_q)$ and $\psi$ as in the theorem statement, and
  notice that no generality is lost if we assume that
  $\psi(q)\in[0,1/2]$ for all $q$. After all, if $\psi(q) > 1/2$
  infinitely often, then $A_q = [0,1]$ infinitely often and there is
  nothing to prove. This means we may assume $\psi(q) > 1/2$ holds for
  finitely many $q$, and in turn we may alter $\psi$ at all such $q$
  without sacrificing the assumptions of the theorem.

  For each $q\in\NN$, let $A_q$ be as in~\th~\ref{overlaps}. Then for
  all $1 \leq r < q$, \th~\ref{overlaps} gives
  \begin{equation*}
    \meas(A_q\cap A_r)
    \leq 2 \meas(A_q)\meas(A_r)  + \frac{\gcd(q,r)}{q}\meas(A_q).
  \end{equation*}
  Let $U\subset [0,1]$ be an open set. By~\th~\ref{uniformity},
  \begin{equation*}
    \meas(A_q\cap U) \geq \frac{1}{2}\meas(A_q)\meas(U)
  \end{equation*}
  for all $q$ sufficiently large. We are in a position to
  apply~\th~\ref{prop:abstractyu} with $\kappa = 2$ and
  $\eta(q,r) = \frac{\gcd(q,r)}{q}$. Note that
  \begin{equation*}
    \eta(q) = \sum_{1\leq r \leq q}\frac{\gcd(q,r)}{q} = \sum_{d\mid q}\frac{\varphi(d)}{d} \leq d(q), 
  \end{equation*}
  where $d(q)$ is the number of divisors of $q$. Put
  \begin{equation*}
    f(x) = x (\log x) (\log\log x) \dots (\underbrace{\log\log\dots\log}_{k-2 \textrm{ iterates}} x)^{1+\eps}.
  \end{equation*}
  In this case,
  \begin{equation}\label{eq:log2}
    \sum_{n\leq x}\frac{1}{f((\log 2)\log\log n)\sqrt{\log n} }\asymp \sum_{n\leq x}\frac{1}{f(\log\log n)\sqrt{\log n} } \asymp \frac{x}{f(\log\log x)\sqrt{\log x}}.
  \end{equation}
  Since, by assumption, 
  \begin{equation*}
    \sum_{q=1}^\infty \frac{\psi(q)}{f(\log\log q)\sqrt{\log q}} = \infty,
  \end{equation*}
  and $\psi$ is decreasing, it follows from~\eqref{eq:log2} and~\th\ref{loglogsum,lem:avgorder} that
  \begin{equation*}
    \sum_{q=1}^\infty\frac{\psi(q)}{f(\log d(q))d(q)} = \infty.
  \end{equation*}
  Put
  \begin{equation*}
      h(x) = f(\log x)
  \end{equation*}
  and note that~\eqref{eq:hconv} and~(\ref{eq:etasum}) are
  satisfied. Now~\th\ref{prop:abstractyu} implies
  \begin{equation*}
      \meas(W(\psi,\gamma)\cap U) \geq \frac{\meas(U)^2}{8}.
  \end{equation*}
  Since the open set $U\subset [0,1]$ was arbitrary,
  \th\ref{BDVdensitylemma} gives $\meas(W(\psi,\gamma)) =1$ and the
  theorem is proved.
\end{proof}

\section{QIA for a fixed target center}

Most proofs of Khintchine-type results work by first establishing
quasi-independence on average for the sets that arise in the problem.
The idea for the proof of~\th~\ref{thm:finitetarget} is a pigeonholing
argument that takes as input the quasi-independence that comes from
proofs of the inhomogeneous Khintchine theorem (with a fixed target
center). One such proof is to be found in Schmidt~\cite{Schmidt}. The
purpose of this section is to prove~\th\ref{prop:qia}, a
quasi-independence result that can be deduced
from~\cite[Proposition~2]{Schmidt}. We include its proof for
completeness, and also so that the reader can see (in~\th~\ref{lem:N})
where in its proof it is important that the inhomogeneous parameter is
fixed.

Let $\gamma\in\RR$. For each $q\in\NN$, let $A:=A(q), B:=B(q)$ be positive
integers satisfying $(A,B)=1$ and
\begin{equation}\label{eq:AandB}
  1\leq B \leq q^{1/2}\quad\textrm{and}\quad \abs*{\gamma - \frac{A}{B}} < \frac{1}{q^{1/2}B},
\end{equation}
and define
\begin{equation}\label{eq:S}
  S(q)= \set{a=0, \dots, q-1 : (aB + A, q)=1}.
\end{equation}
Notice that if $\gamma\in \QQ$, then for all $q$ large enough, we have
$\gamma = A(q)/B(q) = A/B$.

Let
\begin{align}\label{eq:partialrestrictedA}
  A_q'(\psi,\gamma)
  &= \set*{\alpha \in [0,1] : \abs{q\alpha - p - \gamma}<\psi(q) \textrm{ for some } p\in S(q)}\\
  &= \bigcup_{a\in S(q)}\parens*{\frac{a+\gamma - \psi(q)}{q}, \frac{a+\gamma + \psi(q)}{q}}\cap [0,1]. \nonumber
\end{align}
Then $W(\psi, \gamma) = \limsup_{q\to\infty} A_q'(\psi, \gamma)$, and
we aim to establish quasi-independence no average for the sets
$A_q'$. The following lemma is key in the calculation.

\begin{lemma}[{\cite[Lemma~13]{Schmidt}}]\th\label{lem:N}
  Let $\gamma\in\RR$, $q\in\NN$, and
  $0 < \delta \leq 1/4$. Let $A:=A(q), B:=B(q), S:=S(q)$ be as
  in~(\ref{eq:AandB}) and~(\ref{eq:S}). For $r\in\NN$, put
  \begin{equation*}
    N_\delta(q,r) = \#\set{(a,b)\in S\times \set{0, \dots, r-1} : \underbrace{\abs{(a+\gamma)r-(b+\gamma)q} < (q,r)^{\delta}}_\star}. 
  \end{equation*}
  Then
  \begin{equation*}
    \sum_{\substack{r < q \\ (q,r) = d}} N_\delta(q,r) \ll qd^{-1/4} + d
  \end{equation*}
  for every $d\mid q$.
\end{lemma}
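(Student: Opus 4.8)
The plan is to convert the statement into a count of integer points in a lattice strip, linearise it using the rational approximation to $\gamma$ from~\eqref{eq:AandB}, and then extract a power saving in $d$ from the coprimality condition built into $S(q)$. It is crucial here that $A = A(q)$ and $B = B(q)$ depend on $q$ only, not on $r$.

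First I would normalise by $d = (q,r)$: writing $q = dq_1$ and $r = dr_1$ with $(q_1,r_1) = 1$ and $1 \leq r_1 < q_1$ (the strict inequality because $r < q$), the condition $\star$ is equivalent to $\abs{(a+\gamma)r_1 - (b+\gamma)q_1} < d^{\delta - 1}$, and since $\delta \leq 1/4$ the right-hand side is at most $d^{-3/4} \leq 1$. Next I would linearise: writing $\gamma = A/B + \theta$ with $\abs{\theta} < 1/(q^{1/2}B)$ and multiplying through by $B$, this becomes
\begin{equation*}
  \abs{M} \leq Bd^{\delta-1} + B\abs{\theta}(q_1 - r_1) < q^{1/2}d^{-3/4} + \frac{q^{1/2}}{d} < 2q^{1/2}d^{-3/4}, \qquad M := r_1(aB+A) - q_1(bB+A) \in \ZZ,
\end{equation*}
using $B \leq q^{1/2}$, $q_1 = q/d$ and $d^{-1} \leq d^{-3/4}$. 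So $N_\delta(q,r)$ counts the solutions of this inequality with $a \in S(q)$, $0 \leq a < q$, $0 \leq b < dr_1$.

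Now the coprimality condition does its work. If $a \in S(q)$ then $(aB+A,q) = 1$, hence $(aB+A,q_1) = 1$; combined with $(r_1,q_1) = 1$ and $M \equiv r_1(aB+A)\pmod{q_1}$, this forces $\gcd(M,q_1) = 1$. In particular, if $q_1 > 1$ then $M \neq 0$, so $1 \leq \abs{M} < 2q^{1/2}d^{-3/4}$, which is impossible once $d \geq (2q^{1/2})^{4/3}$. Hence for $d \gg q^{2/3}$ the sum $\sum_{(q,r)=d}N_\delta(q,r)$ is empty (the alternative $q_1 = 1$, i.e.\ $d = q$, being vacuous as then there is no $r < q$ with $(q,r) = d$), and the claimed bound holds trivially in that range.

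For $1 \leq d \ll q^{2/3}$ the quantity $2q^{1/2}d^{-3/4}$ may exceed $1$, so $M$ genuinely ranges over $\asymp q^{1/2}d^{-3/4}$ integers, and the remaining task---the heart of the lemma, and where I expect the real difficulty---is to prove $\sum_{(q,r) = d,\, r < q} N_\delta(q,r) \ll q d^{-1/4} + d$. The plan is to bound $N_\delta(q,r)$ by the number of \emph{admissible} $M$ (integers in the above range coprime to $q_1$) times a per-$M$ count, and then sum over $M$ and over $r$. For fixed admissible $M$ and fixed $r$, the relation $r_1(aB+A) - q_1(bB+A) = M$ together with $a \in S(q)$ pins $aB+A$ to a single residue class modulo $q_1/\gcd(B,q_1)$, hence to $O(d\gcd(B,q_1))$ values of $a$, and then determines $b$; moreover requiring the resulting $b$ to be an integer of the correct form $bB+A$ forces $M \equiv A(r_1 - q_1)\pmod B$, which in turn restricts $r_1$ to a single residue class modulo $B$. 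Assembling these restrictions---tracking the interplay between $q_1$, the common difference $B$ of the two progressions $\set{aB+A}_a$, $\set{bB+A}_b$, the ranges of $a$ and $b$, and the factor $\gcd(B,q_1)$---and controlling the sum over $M$ via the divisor-type estimates of~\th~\ref{lem:fromHW} (in particular $\sum_{n\leq N}\sigma_{-1}(n) \ll N$ and $\sigma_0(n) \ll n^{\eps}$) should yield the bound. This last bookkeeping is the only genuinely delicate part; it is essentially the content of~\cite[Lemma~13]{Schmidt}, and everything preceding it is formal.
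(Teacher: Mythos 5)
Your opening reductions are fine: passing to $q_1=q/d$, $r_1=r/d$, linearising via $\gamma=A/B+\theta$ with $|\theta|<1/(q^{1/2}B)$, and observing that the coprimality $(aB+A,q)=1$ forces $\gcd(M,q_1)=1$, hence $M\neq 0$, are all correct, and your use of this to kill the range $d\gg q^{2/3}$ is a legitimate (if differently packaged) version of what the paper does in its Case~1. But the proposal stops exactly where the lemma actually lives. For $d\ll q^{2/3}$ you only sketch a plan and then defer to Schmidt's Lemma~13 (``should yield the bound'', ``essentially the content of\ldots''); since that counting \emph{is} the statement to be proved, this is a genuine gap, not a finishing detail.

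Moreover, the assembly you sketch does not obviously close. Counting (admissible $M$) $\times$ ($r_1$ in one residue class mod $B$) $\times$ ($O(d\gcd(B,q_1))$ values of $a$ per pair) gives roughly $\bigl(q^{1/2}d^{-3/4}+1\bigr)\bigl(q_1/B+1\bigr)d\gcd(B,q_1)$, which for $B=1$ and small $d$ is of size $q^{3/2}d^{-3/4}$ --- far above the target $qd^{-1/4}+d$. The missing saving is the point of the paper's proof: for each fixed $r$ the inequality forces the condition $\norm{(q'-r')\gamma}<d^{-3/4}$ on $r'$ alone, with at most $2d$ pairs $(a,b)$ per admissible $r$, so everything reduces to bounding $L=\#\set{k\leq q' : \norm{k\gamma}<d^{-3/4}}$ by $\ll qd^{-5/4}+1$. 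That bound is extracted from the Diophantine data of~(\ref{eq:AandB}): in the subcase $2B\geq d^{1/4}$ one uses that the points $kA/B\pmod 1$ are spaced $1/B$ apart, and in the complementary subcase $2B<d^{1/4}$, $2Bq'|\gamma-A/B|\geq 1$ one counts solutions of $\abs{m/B+k(\gamma-A/B)}<d^{-3/4}$ directly. Without this spacing argument (or an equivalent substitute exploiting the size constraint on $M$ jointly with the arithmetic progressions in $a$ and $r_1$, rather than multiplying the three counts independently), your outline does not reach $qd^{-1/4}+d$.
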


\begin{proof}
  Let $d\mid q$. We proceed in cases.

  \subsubsection*{Case 1}

  Assume $2B < d^{1/4}$ and $2Bq'\abs{\gamma - A/B} < 1$.  Then for
  $r<q$ with $(q,r)=d$, the inequality~$\star$ implies that
  \begin{equation*}
    \abs*{(a + A/B)r - (b+A/B)q + (r-q)(\gamma - A/B)} < d^{1/4}
  \end{equation*}
  hence
  \begin{equation*}
    \abs*{\parens*{a + \frac{A}{B}}r - \parens*{b+\frac{A}{B}}q}< d^{1/4}  + (q-r)\parens*{\gamma - \frac{A}{B}} < \frac{d}{B}.
  \end{equation*}
  Since $B\geq 1$, the inequality implies
  \begin{equation*}
    \abs*{\parens*{a + \frac{A}{B}}r - \parens*{b+\frac{A}{B}}q}=0,
  \end{equation*}
  which has no solutions $r < q$ because $(aB + A, q)=1$. We remark
  that this is the only place where this comprimality condition is
  used.

  \subsubsection*{Case 2}

  Assume $2B\geq d^{1/4}$ or $2Bq'\abs{\gamma - A/B} \geq 1$. For
  $r<q$ such that $(q,r)=d$ the inequality~$\star$ implies 
  \begin{equation}\label{eq:case2}
    \abs{(a+\gamma)r'-(b+\gamma)q'} < d^{-3/4}
  \end{equation}
  where $q' = q/d$ and $r' = r/d$. This in turn implies that
  \begin{equation}\label{eq:case2implication}
    \norm{(q'-r')\gamma} < d^{-3/4}. 
  \end{equation}
  We aim to bound the number of $r' = 0, \dots, q'-1$ such that the
  above inequality holds or, equivalently,
  \begin{equation*}
    L(q,r):=\#\set{k=1, \dots, q' : \norm{k\gamma} < d^{-3/4}}.
  \end{equation*}
  For a given $1\leq r < q$, there are at most $2$ integers $\ell$
  such that
  \begin{equation*}
    \abs{(q'-r')\gamma - \ell} < d^{-3/4},
  \end{equation*}
  and for each such $q,r,\ell$, at most $d$ choices of $(a,b)$ that
  will satisfy~(\ref{eq:case2}), hence, we have
  \begin{equation}\label{eq:multbyd}
    \sum_{\substack{r < q \\ d \mid r}}N_\delta(q,r) \leq  2d L(q,r).
  \end{equation}
  There are two sub-cases.

  The first subcase is $2B\geq d^{1/4}$.  Note that
  \begin{equation}\label{eq:subcase1}
    \norm{k\gamma} = \norm*{k\parens*{\frac{A}{B} + (\gamma - A/B)}} = \abs*{k\parens*{\frac{A}{B} + (\gamma - A/B)} - \ell}
  \end{equation}
  for some $\ell\in\ZZ$. In particular, $\norm{k\gamma}<d^{-3/4}$ implies 
  \begin{equation*}
    \norm*{\frac{kA}{B} - \ell} < d^{-3/4} + k\abs*{\gamma - \frac{A}{B}}
  \end{equation*}
  Since $(A,B)=1$, the fractional part $\set{kA/B + \ell}$ cycles
  through $B$ distinct points in $[0,1]$ as
  $k=1, \dots, B, \dots, q'$, separated by $1/B$. So we may
  bound the number of solutions (with $k=1, \dots, B$) to
  \begin{equation*}
    \norm*{\frac{kA}{B} - \ell} < d^{-3/4} + B\abs*{\gamma - \frac{A}{B}} \leq d^{-3/4}  + q^{-1/2}
  \end{equation*}
  by $B(d^{-3/4} + q^{-1/2})+1\ll Bd^{-3/4}+1$. As $k=1, \dots, q'$,
  the fractional party $\set{kA/B + \ell}$ completes $\leq qd^{-1}+1$
  cycles, so we may bound
  \begin{align*}
    L(q,r)
    &\ll \parens*{Bd^{-3/4} +1}\parens*{\frac{q'}{B} + 1}\\
    &\ll qd^{-7/4} + Bd^{-3/4}+\frac{qd^{-1}}{B} + 1 \\
    &\leq qd^{-7/4} + q^{1/2}d^{-3/4}+ 2qd^{-5/4} + 1 \\
    &\ll qd^{-5/4} + 1.
  \end{align*}
  Then, by~(\ref{eq:multbyd}),
  \begin{equation*}
    \sum_{\substack{r < q \\ d \mid r}}N_\delta(q,r) \ll qd^{-1/4} + d,
  \end{equation*}
  which settles this subcase.

  The only remaining subcase is $2B < d^{1/4}$ and
  $2Bq'\abs{\gamma - A/B} \geq 1$. Then~(\ref{eq:subcase1}) leads to
  \begin{equation}\label{eq:subcase2}
    \abs*{\frac{m}{B} + k\parens*{\gamma - \frac{A}{B}}} < d^{-3/4}
  \end{equation}
  for some integer $m$ satisfying
  \begin{equation*}
    \abs{m} < \parens*{d^{-3/4} + q' \abs*{\gamma - \frac{A}{B}}}B.
  \end{equation*}
  For any such $m$ (fixed), the number of solutions $k=1, \dots, q'$
  to~(\ref{eq:subcase2}) is bounded by
  \begin{equation*}
    2d^{-3/4}\abs*{\gamma - \frac{A}{B}}^{-1} + 1. 
  \end{equation*}
  Multiplying, we find that
  \begin{align*}
    L(q,r)
    &\ll \parens*{\parens*{d^{-3/4} + q' \abs*{\gamma - \frac{A}{B}}}B + 1}\parens*{2d^{-3/4}\abs*{\gamma - \frac{A}{B}}^{-1} + 1}\\
    &\ll d^{-3/2}B^2q' + Bd^{-3/4} + q'B d^{-3/4} + q'q^{-1/2} + 1\\
    &\ll q d^{-5/4} + 1.
  \end{align*}
  Now, by~(\ref{eq:multbyd}), 
  \begin{equation*}
    \sum_{\substack{r < q \\ d \mid r}}N_\delta(q,r) \ll q d^{-1/4} + d,
  \end{equation*}
  which settles the second case.
\end{proof}

\begin{proposition}[{\cite[Proposition~2]{Schmidt}}]\th\label{prop:qia}
  Let $\gamma\in \RR$ and $\psi:\NN\to\RR_{\geq 0}$
  decreasing and satisfying $\psi(q) \leq 1/q$ and $\sum\psi(q)=\infty$. Then
  \begin{equation*}
    \sum_{q,r=1}^Q \meas(A_q'(\psi,\gamma)\cap A_r'(\psi,\gamma)) \ll \Psi(Q)^2
  \end{equation*}
  where $\Psi(Q) = \sum_{q=1}^Q\psi(q)$.
\end{proposition}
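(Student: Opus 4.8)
The plan is to reconstruct the proof of~\cite[Proposition~2]{Schmidt}, whose engine is Lemma~\th\ref{lem:N}.

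I would first dispose of the diagonal. Since $\psi(q)\le 1/q$, each $A_q'$ is a union of $|S(q)|\le q$ intervals of length $2\psi(q)/q$, so $\meas(A_q')\le 2\psi(q)$, and $\sum_{q\le Q}\meas(A_q'\cap A_q')=\sum_{q\le Q}\meas(A_q')\le 2\Psi(Q)$, which is $O(\Psi(Q)^2)$ since $\Psi(Q)\to\infty$. Thus it suffices to bound $2\sum_{r<q\le Q}\meas(A_q'\cap A_r')$.

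For a fixed pair $r<q$, write $d=(q,r)$, $q'=q/d$, $r'=r/d$, $k=q'-r'$. As in the proof of Lemma~\th\ref{overlaps}, $A_q'\cap A_r'$ is a union of intervals of length $\le 2\psi(q)/q$ (using that $\psi$ is decreasing), indexed by pairs $(a,b)\in S(q)\times\{0,\dots,r-1\}$ with $|(a+\gamma)r-(b+\gamma)q|<\psi(q)r+\psi(r)q=:T$; setting $c=ar-bq$, a multiple of $d$, this reads $|c-d\gamma k|<T$. The crude reading of this — at most $2T/d+1$ admissible $c$, at most $d$ pairs $(a,b)$ per $c$ — only gives $\meas(A_q'\cap A_r')\le\tfrac{2\psi(q)}{q}(2T+d)$, whose $\gcd$-term is too large: it sums to $\sum_{q\le Q}\psi(q)\,d(q)$, which is not $O(\Psi(Q)^2)$ for slowly divergent $\psi$ such as $\psi(q)\asymp 1/(q\log q)$. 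The fix is to separate the overlap count according to $(q,r)^{1/4}$. The part with $|(a+\gamma)r-(b+\gamma)q|<d^{1/4}$ is, after summing over $r<q$ with $(q,r)=d$, at most $\ll qd^{-1/4}+d$ by Lemma~\th\ref{lem:N} (with $\delta=1/4$); multiplying by $2\psi(q)/q$, summing over $d\mid q$, and using $\sum_{q\le Q}\psi(q)\sigma_\tau(q)\ll\Psi(Q)$ for $\tau<0$ — which follows from Lemma~\th\ref{lem:fromHW} and Lemma~\th\ref{lem:avgorder} — this part contributes $\ll\Psi(Q)$. The complementary part, where $|(a+\gamma)r-(b+\gamma)q|\ge d^{1/4}$, is (since $\psi(q)\le 1/q$ forces $T<1+q/r$, at least once $d$ is large) confined to $r\ll q\,d^{-1/4}$; there the two intervals are separated by at least $d^{1/4}/(qr)$, so one drops the crude per-pair bound $2\psi(q)/q$ in favour of the true overlap length $\tfrac{\psi(q)}{q}+\tfrac{\psi(r)}{r}-\tfrac{|c-d\gamma k|}{qr}$ and sums carefully, exploiting monotonicity of $\psi$ and $\psi(q)\le 1/q$ throughout, to recover a main term $\asymp\psi(q)\psi(r)$ (total $\ll\Psi(Q)^2$) together with errors summable against $\Psi(Q)^2$.

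I expect this last step to be the main obstacle: obtaining a bound of the shape $\meas(A_q'\cap A_r')\ll\psi(q)\psi(r)+\tfrac{\psi(q)}{q}\,K(q,r)$, where $K(q,r)$ is a count controlled by Lemma~\th\ref{lem:N}, rather than the lossy $\tfrac{\psi(q)}{q}(2T+d)$. It is precisely here that the \emph{fixed} inhomogeneous parameter is indispensable — as the case analysis in the proof of Lemma~\th\ref{lem:N} shows, one works with the rational approximation $A/B$ to $\gamma$ from~\eqref{eq:AandB}: when $B$ is small relative to $d$ the coprimality condition defining $S(q)$ forces the otherwise-dominant contribution (that of the multiple of $d$ nearest to $d\gamma k$) to vanish, while when $B$ is large relative to $d$ one controls the number of $k\le q'$ with $\norm{k\gamma}$ small via the continued fraction expansion of $\gamma$. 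Assembling the two ranges and summing over $q,r\le Q$ with the divisor-sum bounds of Lemma~\th\ref{lem:fromHW} then yields $\sum_{q,r=1}^Q\meas(A_q'\cap A_r')\ll\Psi(Q)^2$.
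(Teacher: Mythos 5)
Your skeleton matches the paper's in its essentials (the diagonal, \th\ref{lem:N} as the engine, divisor sums closed out by \th\ref{lem:fromHW,lem:avgorder}), but there is a genuine gap, and it sits exactly where you flag it: the contribution of the ``far'' pairs, those with $\abs{(a+\gamma)r-(b+\gamma)q}\ge d^{1/4}$, is never actually bounded. You defer it to a ``careful summation'' of true overlap lengths and to a hoped-for estimate of the shape $\meas(A_q'\cap A_r')\ll\psi(q)\psi(r)+\frac{\psi(q)}{q}K(q,r)$ with $K(q,r)$ ``controlled by \th\ref{lem:N}''; but \th\ref{lem:N} counts precisely the near pairs (threshold $(q,r)^\delta$, $\delta\le 1/4$), so it says nothing about such a $K$, and no substitute estimate is supplied. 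Since this is the decisive step, the proof is incomplete as written.

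The obstacle you anticipate is, however, not there: the bound you dismiss as lossy already suffices on the far range, for the reason you mention in passing and then abandon. Far pairs exist only if $T=r\psi(q)+q\psi(r)\ge d^{1/4}$, and $\psi(r)\le 1/r$ with $r<q$ gives $T\le 2q/r$, hence $r\le 2qd^{-1/4}$. By the count in~\eqref{eq:toberecalled} the far pairs number $\ll q\psi(r)+d$ (using $r\psi(q)\le q\psi(r)$), and each contributes overlap length $\le 2\psi(q)/q$, so the far contribution for a fixed pair $(q,r)$ is $\ll\psi(q)\psi(r)+\frac{\psi(q)}{q}d$. The first term sums over $r<q\le Q$ to $\ll\Psi(Q)^2$; for the second, for each $d\mid q$ there are at most $\ll qd^{-5/4}$ admissible $r$ with $(q,r)=d$, so summing over such $r$ gives $\ll\psi(q)d^{-1/4}$, then $\ll\psi(q)\sigma_{-1/4}(q)$ over $d\mid q$, and $\ll\Psi(Q)$ over $q\le Q$ by \th\ref{lem:fromHW,lem:avgorder}. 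That closes your argument; no refined overlap-length computation is needed. For comparison, the paper never splits the pairs $(a,b)$ at all: it keeps the crude bound $\meas(A_q'\cap A_r')\le\frac{2\psi(q)}{q}M(q,r)$ throughout and instead splits the sum over $r$ into three ranges, $q\psi(r)\ge(q,r)$ (where $M\ll q\psi(r)$, giving $\Psi(Q)^2$), $(q,r)>q\psi(r)\ge(q,r)^{1/5}$ (where $M\ll(q,r)$ and the range condition forces $r\le q(q,r)^{-1/5}$, giving $\psi(q)\sigma_{-1/5}(q)$), and $q\psi(r)<(q,r)^{1/5}$ (where $M\le N_{1/5}(q,r)$ and \th\ref{lem:N} applies). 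A minor further point: your description of the large-$B$ case of \th\ref{lem:N} as using the continued fraction expansion of $\gamma$ is not how the paper argues (it uses the spacing of multiples of the rational $A/B$ from~\eqref{eq:AandB}), but since you invoke that lemma as a black box this does not affect the proposal.
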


\begin{proof}
  For brevity, denote $A_q':=A_q'(\psi,\gamma)$, where
  $A_q'(\psi,\gamma)$ is as in~(\ref{eq:partialrestrictedA}). For
  $q,r\in\NN$, define
  \begin{equation*}
    M(q,r) = \#\set{(a,b)\in S(q)\times S(r) : \abs{(a+\gamma)r-(b+\gamma)q} < 2r\psi(q)},
  \end{equation*}
  Then for $r < q$, 
  \begin{equation*}
    \meas(A_q'\cap A_r') \leq \frac{2\psi(q)}{q} M(q,r),
  \end{equation*}
  trivially. It follows that
  \begin{align}
    \sum_{q,r=1}^Q \meas(A_q'\cap A_r')
    &\ll \sum_{q=1}^Q\frac{\psi(q)}{q}\sum_{r=1}^qM(q,r) \nonumber \\
    &\ll \sum_{q=1}^Q\frac{\psi(q)}{q}\sum_{\substack{r=1 \\ q\psi(r) \geq (q,r)}}^q M (q,r)  \label{eq:first} \\
    &\quad + \sum_{q=1}^Q\frac{\psi(q)}{q}\sum_{\substack{r=1 \\ (q,r)> q\psi(r) \geq (q,r)^{1/5}}}^qM(q,r)  \label{eq:second} \\
    &\quad + \sum_{q=1}^Q\frac{\psi(q)}{q}\sum_{\substack{r=1 \\ q\psi(r) < (q,r)^{1/5}}}^qM(q,r) \label{eq:third}
  \end{align}
  For $q\psi(r) \geq (q,r)$ we have by~(\ref{eq:toberecalled})
  \begin{equation*}
    M(q,r) \ll \parens*{\frac{q\psi(r)}{(q,r)} + 1} (q,r) \ll q\psi(r)
  \end{equation*}
  so the first term~\eqref{eq:first} is
  \begin{align*}
    \sum_{q=1}^Q\frac{\psi(q)}{q}\sum_{\substack{r=1 \\ q\psi(r) \geq (q,r)}}^q M(q,r)
    &\ll \sum_{q=1}^Q\frac{\psi(q)}{q}\sum_{\substack{r=1 \\ q\psi(r) \geq (q,r)}}^qq\psi(r)\\
    &\leq \sum_{q=1}^Q\psi(q) \sum_{r=1 }^q \psi(r)\\
    &\ll \Psi(Q)^2. 
  \end{align*}
  When $(q,r) \geq q\psi(r)$ one has $M(q,r) \ll(q,r)$
  by~\ref{overlaps}. The second term~\eqref{eq:second} becomes
  \begin{align*}
    \sum_{q=1}^Q\frac{\psi(q)}{q}\sum_{\substack{r=1 \\ (q,r) \geq q\psi(r) \geq (q,r)^{1/5}}}^qM (q,r)
    &\leq \sum_{q=1}^Q\frac{\psi(q)}{q} \sum_{\substack{r=1 \\ q/r\geq (q,r)^{1/5}}}^q(q,r)\\
    &\leq \sum_{q=1}^Q\frac{\psi(q)}{q} \sum_{d\mid q}d \sum_{\substack{(q,r)=d \\ r \leq q d^{-1/5}}} 1 \\
    &\leq \sum_{q=1}^Q\frac{\psi(q)}{q} \sum_{d\mid q} q d^{-1/5} \\
    &\leq \sum_{q=1}^Q\psi(q)\sigma_{-1/5}(q) \\
    &\ll \Psi(Q),
  \end{align*}
  where we have used~\th~\ref{lem:fromHW,lem:avgorder}.

  The third term~\eqref{eq:third} is
  \begin{align*}
    \sum_{q=1}^Q\frac{\psi(q)}{q}\sum_{\substack{r=1 \\ q\psi(r) < (q,r)^{1/5}}}^q M(q,r)
    &\leq \sum_{q=1}^Q\frac{\psi(q)}{q}\sum_{\substack{r=1 \\ q\psi(r) < (q,r)^{1/5}}}^q N_{1/5}(q,r),
  \end{align*}
  where $N_{1/5}(q,r)$ is as in~\th~\ref{lem:N}. By that same lemma,
  \begin{align*}
    \cdots &\leq \sum_{q=1}^Q\frac{\psi(q)}{q} \sum_{d\mid q} \sum_{\substack{r<q \\ (q,r)=d}} N_{1/5}(q,r)\\
           &\leq \sum_{q=1}^Q\frac{\psi(q)}{q} \sum_{d\mid q} (qd^{-1/4} + d) \\
           &\leq \sum_{q=1}^Q\psi(q)\sigma_{-1/4}(q) + \sum_{q=1}^Q\psi(q)\sigma_{-1}(q) \\
    &\ll \Psi(Q),
  \end{align*}
  By~\th~\ref{lem:fromHW,lem:avgorder}. Combining, the result is
  proved.
\end{proof}

\section{Finitely centered target}

In this section we prove~\th~\ref{thm:finitetarget}. We start with the
following equidistribution statement. It is useful for the application
of~\th~\ref{BHV}.

\begin{lemma}\th\label{equidistribution}
  Let $\gamma\in\RR$ and for each $q\in\NN$, let $S(q)$ be as
  in~(\ref{eq:S}). Then for every $q$ we have
  $\#S(q) \geq \varphi(q)$. Furthermore, the sets
  $\set*{\frac{1}{q} S(q)}_{q\geq 1}$ equidistribute in $[0,1]$.
\end{lemma}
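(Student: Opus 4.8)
The plan is to handle the two assertions separately; both are elementary, the first a prime-by-prime count and the second an application of Weyl's criterion. Crucially, the only hypothesis I will use is $(A,B)=1$ (the size and quality of the approximation $A/B\approx\gamma$ play no role here).

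For the bound $\#S(q)\geq\varphi(q)$ I would count the residues $a\bmod q$ with $\gcd(aB+A,q)=1$ one prime at a time. Fix a prime $p\mid q$. If $p\nmid B$, then $aB+A\equiv 0\pmod p$ has exactly one solution $a\bmod p$, so there are $N_p=p-1$ admissible residues mod $p$. If $p\mid B$, then $p\nmid A$ because $(A,B)=1$, so $aB+A\equiv A\not\equiv 0\pmod p$ for every $a$, and $N_p=p$. Since the condition at $p$ depends only on $a\bmod p$, the Chinese Remainder Theorem gives
\begin{equation*}
  \#S(q)=\frac{q}{\prod_{p\mid q}p}\prod_{p\mid q}N_p=q\prod_{p\mid q}\frac{N_p}{p}\geq q\prod_{p\mid q}\Bigl(1-\frac1p\Bigr)=\varphi(q),
\end{equation*}
using $N_p\geq p-1$ in either case.

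For the equidistribution, write $e(x):=e^{2\pi i x}$. By Weyl's criterion for sequences of finite sets (applicable since $\#S(q)\geq\varphi(q)\to\infty$ by the previous step), it suffices to show that for each fixed integer $h\neq 0$ one has $\#S(q)^{-1}\sum_{a\in S(q)}e(ha/q)\to 0$. Expanding the coprimality condition through $\sum_{d\mid\gcd(m,q)}\mu(d)$ (which equals $1$ if $\gcd(m,q)=1$ and $0$ otherwise; here $\mu$ is the M\"obius function) and interchanging the order of summation,
\begin{equation*}
  \sum_{a\in S(q)}e\!\parens*{\frac{ha}{q}}=\sum_{d\mid q}\mu(d)\sum_{\substack{0\leq a<q\\ d\mid aB+A}}e\!\parens*{\frac{ha}{q}}.
\end{equation*}
The congruence $aB\equiv-A\pmod d$ is solvable only if $\gcd(B,d)\mid A$; since $\gcd(B,d)\mid B$ and $(A,B)=1$, this forces $\gcd(B,d)=1$, and then there is a unique solution $a\equiv a_0\pmod d$ (with $a_0=a_0(q,d)$). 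Hence the inner sum runs over the progression $a=a_0+dt$, $t=0,\dots,q/d-1$, and equals $e(ha_0/q)\sum_{t=0}^{q/d-1}e(ht/(q/d))$; this geometric sum vanishes unless $(q/d)\mid h$, in which case it is $q/d$. Therefore only divisors $d\mid q$ with $(q/d)\mid h$ survive, each contributing at most $q/d$ in absolute value, so, writing $c=q/d$,
\begin{equation*}
  \Abs*{\sum_{a\in S(q)}e\!\parens*{\frac{ha}{q}}}\leq\sum_{\substack{c\mid q\\ c\mid h}}c\leq\sigma_1(\abs{h}),
\end{equation*}
a bound independent of $q$. Dividing by $\#S(q)\geq\varphi(q)\to\infty$ gives the claim, and Weyl's criterion completes the proof.

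The only step requiring genuine care is this M\"obius expansion: one must invoke $(A,B)=1$ to see that every divisor $d\mid q$ with $\gcd(B,d)>1$ contributes nothing, and then notice that the vanishing of the geometric sum confines the surviving $d$ to the few large divisors with $(q/d)\mid h$ — this is exactly what keeps the exponential sum $O_h(1)$ rather than growing with $q$. Everything else is routine bookkeeping.
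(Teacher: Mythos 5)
Your proof is correct; the first half is essentially the paper's argument in different clothing, while the second half takes a genuinely different (though related) route. For $\#S(q)\geq\varphi(q)$, your prime-by-prime count via the Chinese Remainder Theorem (primes $p\mid B$ impose no condition because $(A,B)=1$, primes $p\nmid B$ delete exactly one residue class) is the same mechanism as the paper's factorization $q=q'q''$ with $q''$ carrying the primes dividing $B$, and your exact formula $q\prod_{p\mid q}N_p/p$ matches the paper's $\varphi(q')q''$. For equidistribution, both arguments open the coprimality condition with M\"obius, but then diverge: the paper counts $\#\parens{\tfrac1q S(q)\cap I}$ directly, getting $\meas(I)$ times a main term plus an error $O\parens{\sum_{d\mid q}\abs{\mu(d)}}=O(2^{\omega(q)})$, which is $o(\#S(q))$ since $\#S(q)\geq\varphi(q)$; you instead verify Weyl's criterion, evaluating the complete geometric sum over each progression $a\equiv a_0\ (\mathrm{mod}\ d)$ exactly, so that only divisors with $(q/d)\mid h$ survive and the Weyl sum is bounded by $\sigma_1(\abs{h})$, independent of $q$. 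Your observation that solvability of $aB\equiv -A\ (\mathrm{mod}\ d)$ forces $\gcd(B,d)=1$ (again via $(A,B)=1$) is sound and plays the role of the paper's use of the coprimality of $A$ and $B$. The paper's route yields an interval-level discrepancy bound of shape $O(2^{\omega(q)})$ with no appeal to Fourier analysis, which is marginally more self-contained; your route gives a cleaner, $q$-uniform bound on each Weyl sum and isolates where the arithmetic of $A,B$ enters, at the mild cost of quoting Weyl's criterion for sequences of finite point sets. Either version suffices for the application in the proof of the finitely centered target theorem, where only the qualitative $(1+o(1))$ statement is used.
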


\begin{remark*}
  The assertions of this lemma are known. Schmidt~\cite{Schmidt}
  proves $\#S(q)\geq \varphi(q)$, and the equidistribution statement
  can be found in the proof
  of~\cite[Proposition~1]{beresnevich2024borel} using a different
  argument from the one below. We include proof here for completeness.
\end{remark*}

\begin{proof}
  The condition $\gcd(aB-A, q)=1$ is equivalent to $\gcd(aB-A, q')=1$
  where $q = q' q''$ and every prime divisor of $q''$ also divides $B$
  and $\gcd(B,q')=1$. Now
\begin{align*}
    \#S(q) &= \#\set{a=1, \dots, q : \gcd(aB-A,q')=1} \\
    &= \varphi(q')(q/q') \\
    &= \varphi(q')q'' \\
    &\geq \varphi(q),
\end{align*}
proving the first assertion. 

Now let $I\subset [0,1]$ be a nonempty interval. We have
\begin{align*}
  \#\parens*{\frac{S(q)}{q}\cap I} 
  &= \sum_{a=1}^q \bone_I (a/q)\bone_{\set{1}}(\gcd(aB-A,q)) \\
  &= \sum_{a=1}^q \bone_I(a/q)\sum_{d\mid \gcd(aB-A,q)} \mu(d) \\
  &= \sum_{d\mid q}\mu(d)\sum_{d \mid aB-A}\bone_{I}(a/q).
\end{align*}
Observe that $\set{a\in \ZZ : d \mid aB-A}$ is either an arithmetic
sequence of integers or empty, and in either case we have
\begin{equation*}
  \sum_{d \mid aB-A}\bone_{I}(a/q) = \meas(I) \sum_{\substack{a=1 \\ d \mid aB-A}}^q 1 + O(1). 
\end{equation*}
Hence,
\begin{align*}
  \#\parens*{\frac{S(q)}{q}\cap I} 
  &= \meas(I)\sum_{d\mid q}\mu(d)\sum_{d \mid aB-A}\bone_{I}(a/q) + O\parens*{\sum_{d\mid q} \abs{\mu(d)}} \\
  &= \meas(I) \# S(q)(1 + o(1)),
\end{align*}
after recalling that $\# S(q) \geq \varphi(q)$.
\end{proof}

\th~\ref{prop:qia} contains the assumption that $\psi(q) \leq 1/q$ for
all $q$. The following lemma shows that no generality is lost with
this assumption.

\begin{lemma}[Reduction to $O(1/q)$]\th\label{lem:wlog}
  Suppose $\psi:\NN\to\RR_{\geq 0}$ is decreasing and
  $\sum_{q=1}^\infty \psi(q)=\infty$. Then
  \begin{equation*}
    \sum_{q=1}^\infty \min\set*{\psi(q), \frac{1}{q}}=\infty. 
  \end{equation*}
\end{lemma}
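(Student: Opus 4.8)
The plan is to exploit the fact that $\bar\psi(q):=\min\set{\psi(q),1/q}$ is itself a decreasing, nonnegative function, so that the divergence of $\sum_q\bar\psi(q)$ can be decided dyadically, in the spirit of Cauchy's condensation test. I would not try to argue directly on the set $\set{q:\psi(q)>1/q}$, since that set need not be an interval and the quantity $\psi(q)-1/q$ gives no control on $1/q$; passing to dyadic scales is what makes the truncation at $1/q$ harmless, because on a block $\set{2^{k}+1,\dots,2^{k+1}}$ the decreasing function $\psi$ is comparable to its value at the left endpoint.

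Concretely, I would first record two consequences of dyadic blocking. Splitting $\NN$ into the blocks $\set{2^{k-1}+1,\dots,2^k}$ ($k\geq 1$) and using that $\bar\psi$ is decreasing gives the lower bound $\sum_q\bar\psi(q)\geq \frac12\sum_{k\geq 1}2^k\bar\psi(2^k)$. Splitting $\NN$ instead as $\set{1}\cup\bigcup_{k\geq 0}\set{2^k+1,\dots,2^{k+1}}$ and using that $\psi$ is decreasing gives $\sum_q\psi(q)\leq \psi(1)+\sum_{k\geq 0}2^k\psi(2^k)$; since $\sum_q\psi(q)=\infty$ and $\psi(1)<\infty$, this forces $\sum_{k\geq 0}2^k\psi(2^k)=\infty$.

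It then remains to compare the two condensed series. Setting $b_k:=2^k\psi(2^k)$, one has the identity $2^k\bar\psi(2^k)=\min\set{b_k,1}$, so the task reduces to showing that $\sum_k\min\set{b_k,1\}=\infty$ whenever $\sum_k b_k=\infty$. This is elementary: if $\sum_k\min\set{b_k,1}$ converged, then $\min\set{b_k,1}\to 0$, hence $b_k<1$ for all large $k$, hence $\min\set{b_k,1}=b_k$ for all large $k$, which would force $\sum_k b_k<\infty$, a contradiction. Feeding this back into the first lower bound yields $\sum_q\bar\psi(q)\geq\frac12\sum_{k\geq 1}\min\set{b_k,1}=\infty$, which is the claim. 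There is no genuine obstacle in this argument; the only point requiring a moment's care is the verification of the two elementary dyadic inequalities above, together with keeping track of which direction of the comparison (upper bound for $\psi$, lower bound for $\bar\psi$) is needed where.
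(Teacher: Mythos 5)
Your proof is correct and follows essentially the same route as the paper: condense dyadically, note that $2^k\min\{\psi(2^k),2^{-k}\}=\min\{2^k\psi(2^k),1\}$, and observe that truncating a divergent series of nonnegative terms at $1$ cannot make it converge. The only cosmetic difference is that you verify the condensation inequalities by hand and argue the last step by contraposition, whereas the paper cites Cauchy's condensation test and splits according to whether the minimum equals $1$ infinitely often.
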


\begin{proof}
  Let $\tilde\psi(q) = \min\set{\psi(q), 1/q}$ and note that
  $\tilde\psi$ is decreasing. By Cauchy's condensation test, its
  divergence is equivalent to that of
  $\sum_{k\geq 0} 2^k\tilde\psi(2^k)$. But
\begin{align*}
    \sum_{k\geq 0}2^k \tilde\psi(2^k) 
    &= \sum_{k\geq 0}\min\set{2^k\psi(2^k), 1} \\
    &= \sum_{\substack{k\geq 0 \\ \tilde\psi(2^k) = \psi(2^k)}}2^k\psi(2^k) + \sum_{\substack{k\geq 0 \\ \tilde\psi(2^k) < \psi(2^k)}} 1.
\end{align*}
If there are infinitely many $k\geq 0$ such that
$\tilde\psi(2^k) < \psi(2^k)$, then the second sum diverges, and we
are done. Otherwise, the first sum has the same tail as
$\sum_{k\geq 0}2^k\psi(2^k)$, which diverges, because $\sum \psi(q)$
diverges. Once again, we are done.
\end{proof}

\begin{proof}[Proof of~\th\ref{thm:finitetarget}]
  Let $\set{\pi_1, \dots, \pi_\ell}$,
  $\set{\sigma_1, \dots, \sigma_\ell}$, and $\psi$ be as in the
  theorem statement. By~\th\ref{lem:wlog}, we may assume that
  $\psi(q) \leq 1/q$ for each $q$. For each $q\in \NN$ and
  $k\in\set{1,\dots, \ell}$, denote $A_{q,k}':= A_q'(\psi,\sigma_k)$
  where the latter is as in~\eqref{eq:partialrestrictedA}. Notice that
  \begin{equation*}
    W(\bone_{\pi_k}\psi, \sigma_k) =  \limsup_{\substack{q\to\infty \\ q\in\pi_k}} A_{q,k}'
  \end{equation*}
  for each $k$. 

  By the pigeonhole principle, there is some $k\in\set{1,\dots, \ell}$ (which we now fix)
  such that
  \begin{equation*}
    \sum_{q \in \pi_{k} \cap [1,Q] } \frac{\varphi(q)\psi(q)}{q} \geq \frac{1}{\ell} \sum_{q \in [1, Q]} \frac{\varphi(q)\psi(q)}{q}
  \end{equation*}
  for infinitely many $Q$. In particular, for this $k$ we have
  \begin{equation}\label{eq:cond1}
    \sum_{q \in \pi_{k} \cap [1,Q] } \meas(A_{q,k}') \geq \frac{1}{\ell} \sum_{q \in [1, Q]} \frac{\varphi(q)\psi(q)}{q},
  \end{equation}
  by~\th~\ref{equidistribution}. We now show that
  conditions~(\ref{eqn01}),~(\ref{eqn02}), and~(\ref{vb89}) are
  satisfied by the sets $A_{q,k}'\, (q\in\pi_k)$. 

  First, since $\psi$ is decreasing and $\sum \psi(q)$ diverges, it
  follows that $\sum \frac{\varphi(q)\psi(q)}{q}$ also diverges,
  by~\th~\ref{lem:fromHW,lem:avgorder}. Condition~(\ref{eqn01}) is now
  a consequence of~(\ref{eq:cond1}).

  Next, we have
  \begin{align*}
    \sum_{q,r \in \pi_k \cap [1,Q] }\meas(A_{q,k}' \cap A_{r,k}') 
    &\leq \sum_{q,r \in [1,Q]} \meas(A_{q,k}' \cap A_{r,k}') \\
    &\ll \Psi(Q)^2
  \end{align*}
  by~\th~\ref{prop:qia}. Again
  by~\th~\ref{lem:fromHW,lem:avgorder} and the monotonicity of $\psi$,
  this implies
  \begin{align*}
    \sum_{q,r \in \pi_k \cap [1,Q] }\meas(A_{q,k}' \cap A_{r,k}') 
    &\ll \parens*{\sum_{q\leq Q}\frac{\varphi(q)\psi(q)}{q}}^2 \\
    & \ll \ell^2\parens*{\sum_{q \in \pi_k \cap [1,Q]} \meas(A_{q,k}')}^2,
  \end{align*}
  for infinitely many $Q$, by~(\ref{eq:cond1}). This
  establishes~(\ref{eqn02}). 

  Finally, let $I \subset [0,1]$ be an interval. We have
  \begin{align*}
    \meas(A_{q,k}'\cap I)
    &\leq \frac{\psi(q)}{q}\#\parens*{\frac{S(q)}{q}\cap I} \\
    &= \frac{\psi(q)}{q}\parens*{\# S(q)}\meas(I)(1+o(1)) \\
    &= \meas(A_{q,k}')\meas(I)(1 + o(1))
  \end{align*}
  by~\th~\ref{equidistribution} and the fact that $\psi(q) \leq
  1/q$. This implies that~(\ref{vb89}) is satisfied. Now,
  by~\th~\ref{BHV},
  \begin{equation*}
    \meas\parens*{\limsup_{\substack{q\to\infty \\ q\in\pi_k}} A_{q,k}'}=1.
  \end{equation*}
  Therefore $\meas(W(\bone_{\pi_k}\psi, \sigma_k))=1$ and the proof is finished.
\end{proof}

\subsection*{Acknowledgments}

We thank Manuel Hauke for the helpful comments he gave us on a
previous draft of this article.

\bibliographystyle{plain}
\bibliography{bibliography.bib}




\end{document}